\newtheorem{thm}{Theorem}[section]
\newtheorem*{conj*}{Conjecture}
\newtheorem{lemma}[thm]{Lemma}
\newtheorem{prop}[thm]{Proposition}
\newtheorem{cor}[thm]{Corollary}
\newtheorem{THM}{Theorem}
\newcounter{qqq}
\newcommand{\Proj}{\mathbb{P}}
\newcommand{\W}{\mathcal{W}}
\newcommand{\C}{\mathbb{C}}
\newcommand{\Z}{\mathbb{Z}}
\newcommand{\Fo}{\mathcal{F}}
\newcommand{\Go}{\mathcal{G}}
\newcommand{\Ho}{\mathcal{H}}
\newcommand{\Ol}{\mathcal{O}}
\newcommand{\Aff}{\mathrm{Aff}(\C^2)}
\newcommand{\PP}{\mathbb{P}}
\DeclareMathOperator{\Sym}{Sym}
\title[Logarithmic affine structures, $d$-webs and normal forms]{Logarithmic affine structures, parallelizable $d$-webs and normal forms}
\author{Ruben Lizarbe}
\address{Universidade do Estado do Rio de Janeiro/UERJ, R. S\~ao Francisco Xavier, 524, Maracan\~a,  20550-900, Rio de Janeiro, Brazil. }
\email{ruben.monje@ime.uerj.br}
\author{Frank Loray} 
\address{Univ Rennes, CNRS, IRMAR, UMR 6625, F-35000 Rennes, France. }
\email{frank.loray@univ-rennes1.fr}
\thanks{The authors acknowledge support from CAPES/COFECUB 
(Ma 932/19 ``Feuilletages holomorphes et int\'eraction avec la g\'eom\'etrie'' / process number 88887.192325/2018-00). 
The first author is grateful to the Institut de Recherche
en Math\'ematique de Rennes, IRMAR and the Universit\'e de Rennes 1 
for their hospitality and support. The second author is supported by CNRS, and ANR-16-CE40-0008 project “Foliage”. 
The authors also thank Brazilian-French Network in Mathematics.}
\begin{document}

\begin{abstract}
We study the local analytic classification of affine structures with logarithmic pole on complex surfaces. 
With this result in hand, we can get the local classification of the logarithmic parallelizable $d$-webs, $d\geq 3$. 
\end{abstract}

\maketitle

\smallskip
\noindent


\section{Introduction}

An affine structure on a (smooth) complex surface $S$ is a maximal atlas of charts
$(\phi_i:U_i\to\C^2)$ with transition charts $\phi_j\circ\phi_i^{-1}$ induced by global
affine transformations 
$$\Aff=\left\{F:\C^2\to\C^2\ ;\ Z\mapsto AZ+B,\ \ \ A\in\mathrm{GL}_2(\C),\ B\in\C^2\right\}$$
(see \cite{Klingler} and references therein). In other words, this is a $(G,X)$-structure
in the sense of Ehresmann-Thurston, where $X=\C^2$ and $G=\Aff$.
The vector space of constant vector fields
$\C\langle\partial_x,\partial_y\rangle$ on $X=\C^2$ is invariant under the action of $\Aff$
and is therefore well defined locally on $S$ when pulled-back by the special charts $\phi_i$'s. 
This defines a flat (or curvature free) and torsion free affine connection $T_S\times T_S\to T_S\ ;\ (X,Y)\mapsto \nabla_XY$
whose local horizontal sections correspond to these vector fields. Flatness provides the existence of a basis 
$(Y_1,Y_2)$ of local horizontal sections, i.e. $\nabla_X Y_i=0$ for all $X$.
Torsion freeness implies commutativity of the corresponding vector fields $[Y_1,Y_2]=0$.
See section \ref{S:AffineRiccatiConnections}
for more details. Klingler classified in \cite{Klingler} all affine structures on compact complex surfaces.

In general, none of the local commuting vector fields are globally defined, due to the monodromy of the connection,
which is the linear part of the monodromy of the affine structure. However, the case of finite monodromy
can be related to stronger structure. For instance, when the monodromy is trivial, then the commuting vector fields
globalize to define a flat pencil of foliations in the sense of \cite{Lins}. 
When the monodromy is finite, of order $d$ say, then we get parallelizable $d$-webs.

The goal of this work is to investigate a singular version of these structures. 
A {\bf meromorphic affine structure} on $S$ is
a meromorphic affine connection $\nabla$ on $S$ 
(i.e. with meromorphic Christoffel symbols in local trivializations) with identically vanishing torsion and curvature.
We will say that the meromorphic affine structure is {\bf logarithmic} (resp. {\bf regular-singular})
if the corresponding affine connection, viewed as a linear meromorphic connection
on the tangent bundle, is logarithmic (resp. regular-singular) in the sense of \cite{Deligne}.
The polar set defines a divisor $D$ on $S$ and our aim in this paper is to provide
a sharp description of the regular affine structure at the neighborhood of a generic point of (the support of) $D$.
In order to list our models, it is convenient to describe the closed $1$-forms $\omega_t$ corresponding to 
constant $1$-forms $dx+tdy$ on $X=\C^2$ rather that vector fields or connection. 

\begin{THM}
Let $S$ be a complex surface, and $\nabla$ be a logarithmic affine structure on $S$ with polar divisor $D$.
Then, at a generic point $p$ of $D$, there are local coordinates $(S,p)\to(\C^2, 0)$ such that
the affine structure belongs to one of the following models (with pole $\{y=0\}$):
\begin{enumerate}
\item $\omega_t= dx + ty^{\nu}dy$, $\nu\in \C^*$,
\item $\omega_t= dx + t(\frac{dy}{y^{n}}+\frac{dy}{y})$, $n\in \Z_{>1}$,
\item $\omega_t= dx -y^n\ln(y)dy+ ty^ndy$, $n\in \Z_{\ge0}$,
\item $\omega_t= \frac{dy}{y^n}+(c-x)\frac{dy}{y}-\ln(y)dx+ tdx$, $n\in\Z_{>1}$, $c=0,1$.
\end{enumerate}    
\end{THM}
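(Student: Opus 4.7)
The plan is to work locally at a generic smooth point $p$ of $D$, fix coordinates $(x,y)$ on $(S,p)$ with $D=\{y=0\}$, and reduce the pencil of closed meromorphic 1-forms attached to the affine structure to one of the four listed normal forms by changes of coordinates preserving $D$ together with a reparametrization of $t$. Since $\nabla$ is flat, torsion-free and logarithmic, outside $D$ the affine structure is given by two commuting horizontal vector fields and dually by two closed meromorphic 1-forms $\omega_0,\omega_\infty$ whose affine span is the pencil $\omega_t=\omega_0+t\omega_\infty$. Closedness lets us integrate them to (possibly multi-valued) primitives $X,Y$, which together form the developing map of the affine structure.

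The classifying invariant of the germ at $p$ is the monodromy of the affine structure around a small loop encircling $D$, an element $(A,B)\in\Aff$ whose linear part $A=\exp(2\pi i\,\mathrm{Res}_D\nabla)$ is the monodromy of the underlying logarithmic linear connection and whose translation part $B\in\C^2$ records the integration constants of multi-valued primitives. I would split the analysis according to the Jordan type of $A$ and the compatibility of $B$ with the image of $A-I$. The expected correspondence is: $A$ diagonal with a unit eigenvalue and $B=0$ produces Case~(1) with continuous parameter $\nu$ coming from the second eigenvalue $e^{2\pi i\nu}$; $A=I$ with nontrivial translation produces Case~(2) with the discrete integer $n$ controlling the order of the meromorphic primitive; a unipotent shear in $A$ with aligned translation yields Case~(3); and a unipotent shear together with the remaining translation obstruction (extracted relative to the fixed base point $p$) yields Case~(4), with the dichotomy $c\in\{0,1\}$ keeping track of whether that obstruction survives after all admissible normalizations.

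In each case the reduction proceeds in two steps. First, I would integrate the most regular member of the pencil to obtain a flat coordinate and take it as the new $x$. The remaining freedom consists of coordinate changes of the form $(x,y)\mapsto(x+h(x,y),u(x,y)y)$ with $u(0)\neq 0$, preserving $D$ and the chosen first coordinate modulo additive constants. Second, I would use this freedom, together with a Möbius reparametrization of $t$, to reduce the second 1-form to the listed model, killing inessential Taylor terms. The $\ln(y)$ terms in Cases~(3) and~(4) appear precisely as the cohomological obstruction to removing the residue of a closed 1-form once its primitive has been fixed; they encode the translation part of the affine monodromy that cannot be absorbed into any admissible coordinate change.

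The main obstacle is twofold. On the convergence side, one must verify that the normalizing procedure terminates: at a generic point of $D$, the coefficients entering the homological equations at each order in $y$ are nonzero thanks to generic non-resonance or non-integrality conditions, so the reduction amounts to solving triangular linear systems with invertible leading terms. Identifying the exact genericity hypothesis in each case is routine but needs care. On the sharpness side, one must exhibit a complete set of analytic invariants separating the four models, namely the exponent $\nu$ in~(1), the integer $n$ and the residue in~(2)--(3), and the pair $(n,c)$ in~(4). The hardest bookkeeping is in Case~(4), where disentangling the shear part of the linear monodromy from the base-point dependent translation obstruction in order to extract the discrete parameter $c\in\{0,1\}$ requires tracking both the pole structure of $\omega_t$ and the precise placement of the vanishing locus of the coefficient of $dy/y$ relative to~$p$.
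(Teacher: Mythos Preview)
Your organizing principle---classify by the Jordan type of the affine monodromy $(A,B)$---is close in spirit to what the paper does, but it misses the second layer of the dichotomy that actually drives the argument. The paper first splits on whether the \emph{projective} monodromy around $D$ is semi-simple or parabolic (this gives Theorems~\ref{T:affinestructure} and~\ref{T:affinelogarithmic} respectively), and then within each case it performs a further \emph{geometric} case analysis: each fixed point of the projective monodromy determines a foliation $\Fo$ of the pencil that extends holomorphically through $D$, and the key question is whether $\Fo$ is transversal to $D$ or has $D$ as a leaf. Your proposal does not address this dichotomy, and it is not recoverable from the affine monodromy alone.

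The most serious gap is the case where both invariant foliations $\Fo_0,\Fo_\infty$ are tangent along $D$ (Lemma~\ref{L:Pencil4} and the companion Lemma~\ref{L:Pencil3}). Your plan ``integrate the most regular member and take it as the new $x$'' breaks down here: neither $\omega_0$ nor $\omega_\infty$ restricts to a coordinate transversal to $D$, and there is no obvious triangular system to solve. The paper handles this by invoking a separate normal-form theorem for a pair of smooth foliations sharing a common leaf (Proposition~\ref{Prop:Tang}, coming from \cite{LorayV,Thom,LTT}), computes the Riccati equation in that normal form, and then uses the logarithmic hypothesis to force $\nu=-(n+1)$ and reduce back to the transversal case by picking a \emph{different} member $\omega_{-1}$ of the pencil. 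This reduction is the crux of the semi-simple half and is invisible in your outline.

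Your account of the parabolic cases also misidentifies the invariant separating (3) from (4). It is not ``aligned translation versus residual obstruction''; it is simply whether the unique invariant foliation $\Fo_\infty$ has $D$ as a leaf (Lemma~\ref{L:Pencilinvariant}, giving model~(3)) or is transversal to $D$ (Lemma~\ref{L:Pencilnotinvariant}, giving model~(4)). The parameter $c\in\{0,1\}$ in~(4) arises from whether the coefficient of $\frac{dy}{y}$ vanishes at $p$, after one has already used up the scaling freedom. Finally, the convergence issues are not handled by generic triangular systems: the paper uses Briot--Bouquet for the one-variable normal form of $x^\nu u(x)\,dx$ (Lemma~\ref{L:multivaluedform}) and the Implicit Function Theorem at several specific junctures, and these are what guarantee holomorphic normalizing changes of coordinates.
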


Here, generic point means any point outside a discrete subset that includes at least singular points of $D$. 
Outside the divisor $D$, we get a regular affine structure whose local charts are obtained by 
integrating $\phi=(\int\omega_0,\int\omega_\infty)$. The local affine charts degenerate along $D$,
and may have monodromy around.

One of our motivations comes from the study of singular $d$-webs
(see  \cite{Cerveau}, \cite {Falla}, \cite{PereiraPirio} or \cite{Robert}).
In the regular setting, a {\bf $d$-web} $\W=\Fo_1\boxtimes\cdots\boxtimes\Fo_d$ is locally 
given by $d$ foliations $\Fo_i$ in general position (i.e. pairwise transversal). 
We will say that $\W$ is {\bf parallelizable}, if there exist
local coordinates $(x,y)$ in which $\Fo_i:dx+t_idy$ for $i=1,\ldots,d$.
When $d\ge3$, then these normalizing coordinates are unique up to affine transformation
and such a $d$-web therefore defines an affine structure.
In the singular setting, a $d$-web is defined by a multi-section of the projectivized tangent bundle
$\pi:\Proj(T_S)\to S$, i.e. a surface $\Sigma_\W\subset \Proj(T_S)$ without vertical component, 
that intersects a generic fiber at $d$ distinct points. The discriminant $\Delta_\W$ in $S$
is the locus of points $p\in S$ where $\pi^{-1}(p)$ intersect $\Sigma_\W$ at $<k$ points:
it is the singular locus of $\W$. 
The singular $d$-web is said parallelizable if the regular $k$-web induced on $S\setminus \Delta_\W$
is parallelizable. It turns out that, for $d\ge3$, a singular parallelizable $d$-web defines a meromorphic
affine structure with regular-singular poles along $\Delta_\W$ (and smooth outside). 
We say that $\W$ is logarithmic if the affine structure is so. 
In that direction, we can prove:

\begin{THM}
Let $\W$ be a singular parallelizable $d$-web on $S$, $d\geq3$, 
with logarithmic singular points. Then, at a generic point of the discriminant $\Delta$,
$\W$ is contained in one of the following pencils:
\begin{enumerate}
\item $\{(dx)^q+ty^p(dy)^q=0\}_t$, with $(p,q)$ relatively prime positive integers,
\item $\{y^p(dx)^q+t(dy)^q=0\}_t$, with $(p,q)$ relatively prime positive integers,
\item $\{y^{n+1}dx+t(1+y^n)dy=0\}_t$, with $n$ positive integer.
\end{enumerate}
\end{THM}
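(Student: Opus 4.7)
The plan is to reduce Theorem B to Theorem A by a monodromy analysis of the affine structure attached to $\W$. Since $\W$ is parallelizable with $d\geq 3$, its regular part defines an affine structure $\nabla$ on $S\setminus\Delta$; by hypothesis this extends to a logarithmic affine structure on $S$ with polar divisor $\Delta$. At a generic point of $\Delta$, Theorem A provides local coordinates $(x,y)$ (with pole $\{y=0\}$) in which $\nabla$ is one of the four normal forms, with pencil of constant $1$-forms $\{\omega_t\}_{t\in\PP^1}$. Because foliations of an affine structure correspond bijectively to directions in $\PP^1$, the $d$ foliations of $\W$ single out a subset $T=\{t_1,\ldots,t_d\}\subset\PP^1$.

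Next I would exploit the fact that $\Sigma_\W\subset\Proj(T_S)$ is an algebraic multi-section, hence $T$ must be invariant under the monodromy of $\nabla$ around $\{y=0\}$. A direct substitution computes this action on $t$: in model (1), $\omega_t\mapsto\omega_{e^{2\pi i\nu}t}$; in model (2), trivial; in models (3) and (4), $t\mapsto t-2\pi i$ (since $\ln y\mapsto\ln y+2\pi i$ shifts $\omega_t$ by $-2\pi i\,y^n dy$ or $-2\pi i\,dx$, respectively). In models (3) and (4) this translation forces $T\subseteq\{\infty\}$, contradicting $|T|=d\geq 3$; these cases are therefore ruled out.

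For model (1), invariance of a set of cardinality $\geq 3$ forces $e^{2\pi i\nu}$ to be a primitive $q$-th root of unity, i.e.\ $\nu=p/q$ with $\gcd(p,q)=1$, $q\geq 1$, $p\in\Z\setminus\{0\}$ (the case $p=0$ gives no pole). Writing $\zeta=e^{2\pi ip/q}$, the identity $\prod_{k=0}^{q-1}(dx+t_0\zeta^k y^{p/q}dy)=(dx)^q-(-t_0)^q y^p(dy)^q$ shows that each nontrivial $\zeta$-orbit $\{t_0\zeta^k\}$ defines a member of pencil (1) of Theorem B when $p>0$, or, after multiplying by $y^{|p|/q}$, of pencil (2) when $p<0$; the exceptional orbits $\{0\}$ and $\{\infty\}$ correspond to the degenerate members of the same pencil at $t=0$ and $t=\infty$. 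For model (2), I would clear denominators in $\omega_t=dx+t(y^{-n}+y^{-1})dy=0$ to obtain $y^n dx+t(1+y^{n-1})dy=0$, which matches pencil (3) of Theorem B after the re-indexing $n-1\mapsto n$. The main technical point I expect to need care is verifying that each $\zeta$-orbit corresponds to an \emph{irreducible} member of the pencil rather than a reducible combination; this will follow from $\gcd(p,q)=1$ making $(dx)^q-(-t_0)^q y^p(dy)^q$ irreducible in $\Ol_{S,p}[dx,dy]$, so that the orbit decomposition of $T$ aligns exactly with the factorization of $\W$ into pencil members.
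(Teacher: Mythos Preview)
Your proposal is correct and follows essentially the same route as the paper: both arguments use that the monodromy of the associated affine structure around $\Delta$ must preserve the finite set $T$ of $d\ge3$ directions defined by $\W$, which excludes the parabolic models (3)--(4) of Theorem~A and forces $\nu\in\Q^*$ in model (1). The only difference is packaging: the paper invokes Theorem~\ref{Thm:SingWebCriterium} to obtain finite monodromy abstractly (via the injection $\mathrm{Mon}(\Ho)\hookrightarrow\Sym(d)$) and then quotes the table of Corollary~\ref{C:calmonlog}, whereas you compute the monodromy action $t\mapsto e^{2\pi i\nu}t$, $t\mapsto t$, $t\mapsto t-2\pi i$ directly in each normal form and spell out the orbit-to-$q$-web correspondence $\prod_{k}(dx+t_0\zeta^k y^{p/q}dy)=(dx)^q-(-t_0)^q y^p(dy)^q$ that the paper leaves implicit.
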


For instance, in the first case, for each $t$ we get a $q$-web, except for $t=0$ or $\infty$
where we get a single foliation $dx$ or $dy$; then $\W$ is the superposition of several 
of these webs. In the last item, the monodromy is trivial and the web splits as a union
of $d$ foliations belonging to the pencil.

In the second section, we introduce the notions of affine connections, Riccati foliations
and pencils of foliations and the corresponding notions of curvature and torsion.
The regular setting is already described in \cite{Lizarbe}, and we explain how to 
adapt to the singular setting. 
In particular, we establish one-to-one correspondence between meromorphic affine structures 
and singular parallelizable Riccati foliations. 
An important example of Riccati foliations is induced by a pencil of foliations on a surface that will allow us to understand other cases.

In the third section, we prove our first main result about local classification of affine structures with logarithmic pole. 

In the fourth section we give a brief exposition of $d$-webs on surfaces, $d\geq 3$, with constant 
cross-ratio to later establish a relationship between Riccati foliations and webs.
Then we deduce our second main result about the local normal forms of logarithmic hexagonal $d$-webs.


\section{Affine structure, affine connection and Riccati foliations}\label{S:AffineRiccatiConnections}

An affine structure on a smooth complex surface $S$ is a maximal atlas of charts
$(\phi_i:U_i\to\C^2)$ with transition charts $\phi_j\circ\phi_i^{-1}$ induced by global
affine transformations 
$$\Aff=\left\{F:\C^2\to\C^2\ ;\ Z\mapsto AZ+B,\ \ \ A\in\mathrm{GL}_2(\C),\ B\in\C^2\right\}.$$
If $\phi:U\to\C^2$ is in the atlas, then any $F\circ\phi:U\to\C^2$ is also in the atlas, 
and any chart on $U$ belonging to the atlas takes this form.
Any local chart can be continued analytically along any path.
Indeed, given a path $\gamma:[0,1]\to S$ starting from $p_0=\gamma(0)\in U$,
then we can cover $\gamma$ by open sets $U_0=U,U_1,\ldots,U_n$ such that
\begin{itemize}
\item when $t\in[0,1]$ increases from $0$ to $1$, then $\gamma(t)$ intersects successively 
the $U_i$'s with $i$ increasing from $0$ to $n$,
\item intersections $U_i\cap U_{i+1}$ are contractible,
\item there is a well-defined chart $\phi_i:U_i\to\C$ in the affine atlas.
\end{itemize}
Then we have $\phi_i=F_{i,i+1}\circ \phi_{i+1}$ on $U_i\cap U_{i+1}$ so that
$F_{i,i+1}\circ \phi_{i+1}$ provides an analytic extension of $\phi_i$ on $U_{i+1}$.
Starting from $\phi_0=\phi$ on $U$, we can extend it successively as follows
$$\underbrace{\phi_0}_{U_0}=\underbrace{F_{0,1}\circ \phi_1}_{U_1}
=\underbrace{F_{0,1}\circ F_{1,2}\circ \phi_2}_{U_2}=\cdots
=\underbrace{F_{0,1}\circ F_{1,2}\circ\cdots\circ F_{n-1,n}\circ \phi_n}_{U_n}=:\phi^\gamma.$$
We can check that the analytic continuation $\phi^\gamma$ of $\phi$ along $\gamma$ depends
only on the homotopy type of $\gamma$ with fixed boundary. When $\gamma$ is a loop
(and $U_0=U_n$, $\phi_0=\phi_n=\phi$), then we get 
$$\phi^\gamma=\underbrace{F_{0,1}\circ F_{1,2}\circ\cdots\circ F_{n-1,n}}_{F_\gamma}\circ \phi=F_\gamma\circ\phi$$
and this defines the monodromy representation 
$$\pi_1(S,p_0)\to\Aff\ ;\ \gamma\mapsto F_\gamma.$$
One can locally pull-back constant vector fields $\C\langle\partial_x,\partial_y\rangle$ by affine charts
and we get, locally on $S$, a two-dimensional vector space of commuting vector fields.
Mind that none of these vector field is globally defined, due to the monodromy of the structure:
the linear part of $F_\gamma$ acts on this vector space in the natural way. 
We can encode these collections of local vector fields into a flat structure on $T_S$,
i.e. a affine connection (see section \ref{ss:AffineConnections}).
By duality, we can also pull-back the constant differential  $1$-forms $\C\langle dx,dy\rangle$
and get, locally on $S$, a two-dimensional vector space of closed $1$-forms; 
this can also be encoded into a flat linear connection on the cotangent bundle $\Omega^1_S$.
We can recover the affine charts (and structure) from these latter data by locally straightening 
commuting vector fields to $(\partial_x,\partial_y)$, or closed $1$-forms to $(dx,dy)$
(in fact, coordinates of $\phi$ are given by integrating closed $1$-forms).
A last object related to the affine structure is the pull-back of foliations by parallel lines
which locally defines a $1$-parameter family of foliations on $S$, a so called Veronese web,
that is encoded by a Riccati foliation on the projectivized tangent or cotangent bundle.

\subsection{Affine connections}\label{ss:AffineConnections}

There are several notions of connections in the litterature. From the point of view of algebraic geometers,
an {\bf affine connection} is a linear connection on the tangent bundle, i.e. a $\C$-linear map
\begin{equation}\label{eq:LinearConnection}
\nabla\ : \ T_S\to T_S\otimes\Omega^1_S
\end{equation}
satisfying the Leibniz rule $\nabla(fY)=df\otimes Y+f\nabla Y$. The contraction with
a vector field $X$ provides a $\C$-linear map
\begin{equation}\label{eq:AffineConnection}
T_S\times T_S\to T_S\ ;\ (X,Y)\mapsto \nabla_XY:=i_X(\nabla Y)
\end{equation}
which is $\Ol_S$-linear with respect to $X$, and satisfies the Leibniz rule with respect to $Y$.
This is the point of view of differential geometers. Finally, the equation for horizontal sections
$\nabla Y=0$, viewed as a Pfaffian system on the total space $V$ of $T_S$, defines a distribution
of $2$-planes transversal to the fibration $V\to S$. This gives a way to lift vector fields on $S$
to vector fields on $V$: this is a connection in the sense of Ehresmann.

In local coordinates $(x,y):U\to\C^2$ on $S$, one can trivialize the tangent bundle $T_S\vert_U$
by choosing the basis $(\partial_x,\partial_y)$. Then the linear connection writes
\begin{equation}\label{eq:MatrixConnection}
\nabla=d+\theta,\ \ \ \text{where}\ \ \ 
\theta=\left(\begin{array}{cc}
\theta_{11}&\theta_{12}\\
\theta_{21}&\theta_{22}
\end{array}\right)
\end{equation}
with $\theta_{ij}$ holomorphic $1$-forms on $U$
and Christoffel symbols are given by their coefficients $\theta_{ij}=\Gamma^i_{1j}dx+\Gamma^i_{2j}dy$.
The Ehresmann distribution is given by the corresponding system
\begin{equation}\label{eq:LinearSystem}
\nabla Z=0\ \ \ \Leftrightarrow\ \ \ \left\{\begin{matrix}
dz_1+\theta_{11} z_1+\theta_{12} z_2=0\\ dz_2+\theta_{21} z_1+\theta_{22} z_2=0
\end{matrix}\right.
\end{equation}
where $Z=z_1\partial_x+z_2\partial_y$.

We define the {\bf curvature} of an affine connection as
\begin{equation}\label{eq:curvatureAffineCon}
R^\nabla_{X,Y}Z=\nabla_X\nabla_YZ-\nabla_Y\nabla_XZ-\nabla_{[X,Y]}Z
\end{equation}
where $[X,Y]$ denotes the Lie bracket between vector fields.
We say that $\nabla$ is flat if the curvature vanishes identically.
Equivalently, the linear connection satisfies $\nabla\cdot\nabla=0$,
which locally writes 
\begin{equation}\label{eq:FlatnessMatrixConnection}
d\theta+\theta\wedge \theta=0\ \ \ \Leftrightarrow\ \ \ 
\left\{\begin{matrix} d(\theta_{11}+\theta_{22})=0\\ 
d\theta_{12}+(\theta_{11}-\theta_{22})\wedge\theta_{12}=0\\
d(\theta_{11}-\theta_{22})+2\theta_{12}\wedge\theta_{21}=0\\
d\theta_{21}+\theta_{21}\wedge(\theta_{11}-\theta_{22})=0
\end{matrix}\right.
\end{equation}
The flatness condition is equivalent to Frobenius integrability for the associate Ehresmann 
distribution, and therefore to the existence of a basis $(Y_1,Y_2)$ of local horizontal sections, 
i.e. $\nabla_X Y_i=0$ for all $X$.

We define the {\bf torsion} of an affine connection as
\begin{equation}\label{eq:torsionAffineCon}
T^\nabla(X,Y)=\nabla_XY-\nabla_YX-[X,Y].
\end{equation}
We say that $\nabla$ is torsion-free if the torsion vanishes identically.
In local coordinates, this is equivalent to
\begin{equation}\label{eq:TorsionMatrixConnection}
\theta_{11}(\partial_y)=\theta_{12}(\partial_x)\ \ \ \text{and}\ \ \ \theta_{21}(\partial_y)=\theta_{22}(\partial_x).
\end{equation}
A flat connection is torsion-free if, and only if, basis of horizontal sections define commuting vector fields
$[Y_1,Y_2]=0$. Indeed, we can always assume $Y_1=f(x,y)\partial_x$ and $Y_2=g(x,y)\partial_y$
in convenient coordinates so that the matrix connection writes 
$\theta=-\mathrm{diag}\left(\frac{df}{f},\frac{dg}{g}\right)$ and we have:
$$\text{torsion-free}\ \ \ \Leftrightarrow\ \ \ \frac{\partial f}{\partial_y}=\frac{\partial g}{\partial_x}=0
\ \ \ \Leftrightarrow\ \ \ [f(x,y)\partial_x,g(x,y)\partial_y]=0.$$

\subsection{Riccati foliation and Veronese web}\label{ss:RiccatiVeronese}

Here, we provide another point of view which is more convenient for our computations
(see  \cite{Lizarbe} for much more details).
In fact, we need a little bit less to encode an affine structure. It is well-known that 
projective linear transformations $\mathrm{PGL}(3,\C)$ acting on $\PP^2$, are
locally characterized by the fact that they send lines to lines. In particular, the affine
subgroup $\Aff$ stabilizing the line at infinity is characterized by the fact that it
preserves parallel lines. In other words, it preserves the $1$-parameter family 
of foliations $\Fo_t=\ker(dy-tdx)$ where $t\in\PP^1$ (setting $\Fo_\infty=\ker(dx)$).
When we pull-back on $S$ by affine charts, we get a Veronese web which is 
locally the data of a pencil of pairwise transversal foliations: 
\begin{equation}\label{eq:PencilFoliation}
\Fo_t=\ker(\omega_t),\ \ \ \omega_t=\omega_0-t\omega_\infty
\end{equation}
(see \cite{Lins} for instance). 
Globally on $S$, elements of this pencil are permuted by the monodromy of affine charts
and we fail to have a global pencil: we only have a so-called {\bf Veronese web}.

To properly define such a structure on $S$, we consider the projectivized tangent bundle
$\Proj(T_S)\to S$ which is a $\PP^1$-bundle over $S$ whose fiber at a point $p\in S$ 
is the set of directions through $p$. We will denote by $M$ the total space of $\Proj(T_S)$
and by $\pi:M\rightarrow S$ the natural projection. It is naturally equipped with a contact
structure (see below) so that the natural lifts of curves are Legendrian.
A foliation on $S$ corresponds naturally to a (smooth) section of this bundle
and a Veronese web corresponds to a flat structure on the bundle, i.e. a
$2$-dimensional foliation that is transversal to the fibration:
locally on $U\subset S$, $M\vert_U$ is foliated by sections, each of them producing
an element of the pencil. Below, we detail in local coordinates.

Starting from the affine structure, and the associate flat affine connection $\nabla$ on $T_S$,
we deduce the above flat structure by the action of $\nabla$ on directions (remind $\nabla$ is $\C$-linear).
In local coordinates $(x,y):U\to\C^2$, vectors $z_1\partial_x+z_2\partial_y$ in the fiber of $T_S$ are replaced by homogeneous 
coordinates $(z_1:z_2)=(1:z)$ with $z\in\PP^1$ in $\Proj(T_S)$. 
Therefore, the $\PP^1$-bundle writes
\begin{equation}\label{eq:LocalTrivialPTS}
\begin{matrix}
  \pi: M\vert_U=\Proj^1\times U&\rightarrow U \\
    ((1:z), (x,y)) &\mapsto (x,y),
\end{matrix}
\end{equation}
and the contact structure writes $dy = zdx$.
Then equations (\ref{eq:LinearSystem}) induce a Riccati type Pfaffian equation 
\begin{equation}\label{eq:RiccatiEquation}
\omega=dz+\alpha z^2+\beta z+\gamma=0,\ \ \ \alpha,\beta,\gamma\in\Omega^1(U)
\end{equation}
where
\begin{equation}\label{eq:FromConnectionToRiccati}
\left\{\begin{matrix}
\alpha=-\theta_{12}\hfill\\
\beta=\theta_{22}-\theta_{11}\\
\gamma=\theta_{21}\hfill
\end{matrix}\right.
\end{equation}
The Frobenius integrability condition of $\omega$ writes as follows
\begin{equation}\label{eq:FrobeniusRiccati}
\omega\wedge d\omega=0\ \Leftrightarrow\ 
\left\{\begin{matrix}
d\alpha+\alpha\wedge\beta=0,\\
d\beta+2\alpha\wedge\gamma=0,\\
d\gamma+\beta\wedge\gamma=0.
\end{matrix}\right.
\end{equation}
and is directly implied by flatness condition (\ref{eq:FlatnessMatrixConnection}).
We therefore get a foliation $\Ho^\nabla$ defining a flat structure on $\Proj(T_S)$.
Each leaf of $\Ho^\nabla$ defines a local section $(1:z)=(f(x,y):g(x,y))$
and therefore a foliation, generated by $f(x,y)\partial_x+g(x,y)\partial_y$.
More precisely, there exists a first integral for $\Ho^\nabla$ of the form
\begin{equation}\label{eq:FirstIntegRiccati}
F(x,y,z)=\frac{f_0(x,y)+zg_0(x,y)}{f_\infty(x,y)+zg_\infty(x,y)},\ \ \ 
\delta:=\det\begin{pmatrix}f_0&g_0\\ f_\infty&g_\infty\end{pmatrix}\not=0
\end{equation}
i.e. one retrieve the Riccati equation (\ref{eq:RiccatiEquation}) from the Pfaffian equation $dF=0$ yielding
\begin{equation}\label{eq:FirstIntegToRicc}
\left\{\begin{matrix}
\alpha=\frac{g_0dg_\infty-g_\infty dg_0}{\delta}\hfill\\
\beta=\frac{f_0dg_\infty-g_\infty df_0+g_0df_\infty-f_\infty dg_0}{\delta}\\
\gamma=\frac{f_0df_\infty-f_\infty df_0}{\delta}\hfill
\end{matrix}\right.
\end{equation}
We note that $F$ can be deduced from a basis of $\nabla$-horizontal sections
$$M=\begin{pmatrix}f_0&f_\infty\\ g_0&g_\infty\end{pmatrix},\ \ \ dM+\theta M=0.$$
The leaves of the foliation $\Ho^\nabla$ are defined by fibers $F(x,y,z)=t$, $t\in\PP^1$,
and, taking into account the contact structure $dy-zdx=0$, we retrieve the local pencil 
of foliations defining the Veronese web structure:
$$F(x,y,z)=t$$
\begin{equation}\label{eq:FirstIntegToPencil}
\Updownarrow
\end{equation}
$$(\underbrace{f_0(x,y)dx+g_0(x,y)dy}_{\omega_0})-t(\underbrace{f_\infty(x,y)dx+g_\infty(x,y)dy}_{\omega_\infty})=0.$$
More generally, a flat structure on $\Proj(T_S)\to S$ is a Riccati foliation $\Ho$, 
locally defined in trivialization chart
by a Riccati equation (\ref{eq:RiccatiEquation}) satisfying flatness condition
(\ref{eq:FrobeniusRiccati}). This is the way to define a Veronese web on $S$.

To define an affine structure, it remains to characterize those Veronese webs
that are locally parallelisable, i.e. equivalent to the pencil of constant foliations 
up to change of coordinates (flat pencils in \cite{Lins}). 
In other words, it remains to translate the torsion free condition on the Riccati equation. 
For this, we note that we can lift a Riccati equation (\ref{eq:RiccatiEquation})
to a unique torsion free connection (\ref{eq:MatrixConnection}) by setting
(\ref{eq:FromConnectionToRiccati}) and 
\begin{equation}\label{eq:LiftRicToCon}
\kappa=\theta_{11}+\theta_{22},\ \ \ \text{with}\ \ \ 
\kappa=(2\gamma_2-\beta_1)dx+(\beta_2-2\alpha_1)dy
\end{equation}
under notation
\begin{equation}\label{eq:alphabetagammai}
\left(\begin{array}{c}
\alpha\\
\beta\\
\gamma
\end{array}\right)
=\left(\begin{array}{c}
\alpha_1\\
\beta_1\\
\gamma_1
\end{array}\right)dx+
\left(\begin{array}{c}
\alpha_2\\
\beta_2\\
\gamma_2
\end{array}\right)dy,
\end{equation}
Then, the resulting connection is flat if, and only if $d\kappa=0$. From \cite[Proposition 3.2]{Lizarbe},
the $2$-form $d\kappa$ does not depend on the choice of local coordinates $(x,y)$
and defines a global $2$-form on $S$: we call {\bf torsion} and denote $K(\Ho)\in\Gamma(S,\Omega^2_S)$ 
this global $2$-form. We will say that the Veronese web (or Riccati foliation) is torsion-free if $d\kappa=0$.

Following \cite{Lins}, a pencil of foliations $\Fo_t$ is defined by a family of Pfaffian equations
\begin{equation}\label{eq:Pencil}
(\omega_0-t\omega_\infty=0)_t\ \ \ \text{with}\ \ \ \omega_0\wedge\omega_\infty\not=0
\end{equation}
where the vector space $\C\langle\omega_0,\omega_\infty\rangle$
is well-defined up to multiplication by a function. In particular, one can assume 
$\omega_0$ or $\omega_\infty$ to be a closed $1$-form, but not both of them in general.
Then are equivalent:
\begin{itemize}
\item the pencil $\Fo_t$ is flat in the sense of \cite{Lins},
\item one can define $\Fo_t$ by (\ref{eq:Pencil}) with $d\omega_0=d\omega_\infty=0$,
\item $\Fo_t$ is locally equivalent to the linear pencil $dy-tdx=0$,
\item the corresponding Riccati foliation on $\Proj(T_S)$ is torsion-free: $d\kappa=0$.
\end{itemize}

\subsection{Meromorphic affine structures and singular Riccati foliations}\label{Sec:MeroAffine}

Let $S$ be a smooth complex surface, $M=\Proj(T_S)$ be the projectivization of the tangent 
bundle of $S$, and $\pi:M\rightarrow S$ the natural projection. A singular Riccati foliation $\Ho$ 
on $M$ is a singular holomorphic foliation of codimension 1 on $M$, transverse to the 
generic fiber of $\pi$.

The (effective) {\it polar divisor} $D$ of the Riccati foliation is defined as the direct
image under $\pi$ of the tangency divisor between $\Ho$ and the vertical foliation
defined by the fibers of $\pi$. 

Consider a trivialization of $M$ over an open set $U$ like in (\ref{eq:LocalTrivialPTS})
with coordinates $(x,y):U\to\C^2$ and contact structure given by $dy = zdx$. 
Then the foliation $\Ho$ is given by a non zero 
meromorphic 1-form $\omega$ of the type:
\begin{equation}\label{eq:Ricc1Form}
\omega=dz+\alpha z^2+\beta z+\gamma,\ \ \ \alpha,\beta,\gamma\in\Gamma(U,\Omega^1(D))
\end{equation}
where $\alpha$, $\beta$ and $\gamma$ are meromorphic 1-forms on $U$, 
and satisfying moreover
Frobenius integrability condition (\ref{eq:FrobeniusRiccati}).
The poles of $\alpha$, $\beta$ and $\gamma$ define the polar divisor $D$ of $\Ho$.

The torsion of a singular Riccati foliation $\Ho$ on $M=\Proj(T_S)$ is a meromorphic $2$-form
$K(\Ho)\in  \Gamma(S,\Omega_S^2(2D))$ which is locally defined by $d\kappa$
where $\kappa$ is given by (\ref{eq:LiftRicToCon}). We then say that $\Ho$ is torsion-free
when $K(\Ho)$ is identically vanishing.
A {\bf meromorphic affine structure} is the data of a torsion-free singular Riccati foliation on $\Proj(T_S)$.
From section \ref{ss:RiccatiVeronese}, we see that we get an affine structure in the usual sense
on the complement $S\setminus D$ of the polar locus. But the affine charts degenerate along $D$.

We will say that the affine structure is {\bf logarithmic} (resp.  {\bf regular-singular})
if it is locally induced by a logarithmic (resp.  regular-singular) connexion 
$$\nabla_i\ :\ T_S\vert_{U_i}\to T_S\vert_{U_i}\otimes\Omega^1_{U_i}(D)$$
in the sense of Deligne \cite{Deligne} (see also \cite{Novikov}). 
These properties
can be checked at a generic point of all irreducible components of $D$
and translate as follows on the Riccati foliation.

\begin{prop}\label{prop:RiccLogReg}Let $\Ho$ be a torsion-free singular Riccati foliation 
on $\Proj(T_S)$. Then are equivalent:
\begin{itemize}
\item the affine structure is logarithmic,
\item $\nabla_\Ho$ is logarithmic (see definition below),
\item $\omega$ and $d\omega$ have at most simple poles, 
where $\omega$ is the Riccati $1$-form defining $\Ho$ in local charts.
\end{itemize}
Are also equivalent:
\begin{itemize}
\item the affine structure is regular-singular,
\item there is a bimeromorphic bundle transformation $\psi:\Proj(T_S)\dashrightarrow P$
such that $\psi_*\Ho$ is a logarithmic Riccati foliation on $P$.
\item in local trivializations of $\Proj(T_S)$, solutions of the Riccati equation $\omega$
have polynomial growth when approaching $D$.
\end{itemize}
\end{prop}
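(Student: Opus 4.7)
My strategy is to work in local coordinates $(x,y)$ near a generic point of $D$ with $D=\{y=0\}$, and to exploit the one-to-one correspondence, via \eqref{eq:FromConnectionToRiccati} and \eqref{eq:LiftRicToCon}, between a Riccati $1$-form $\omega = dz + \alpha z^2 + \beta z + \gamma$ and its unique torsion-free lift $\nabla_\Ho = d + \theta$. The equivalence of items (1) and (2) of the logarithmic part is then formal: the torsion-free connection inducing $\Ho$ is unique, so it is logarithmic exactly when $\nabla_\Ho$ is, which is what it means for the affine structure to be logarithmic.

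For the equivalence of (2) and (3), the direction $(2)\Rightarrow(3)$ is direct: logarithmic Christoffel forms $\theta_{ij}$ give logarithmic $\alpha,\beta,\gamma$, so that $\omega$ and $d\omega = z^2 d\alpha + z d\beta + d\gamma - (2z\alpha + \beta)\wedge dz$ have at most simple poles. For the converse, extracting $z$-powers first yields that $\alpha,\beta,\gamma$ and their differentials have simple poles, hence $\theta_{12}$, $\theta_{21}$ and $\theta_{22}-\theta_{11}$ are logarithmic. The remaining datum $\kappa = \theta_{11}+\theta_{22}$, given by \eqref{eq:LiftRicToCon}, has \emph{a priori} only a simple pole along $D$, but is closed by torsion-freeness. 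The key local lemma to establish is: \emph{a closed meromorphic $1$-form with at most simple pole along a smooth divisor is automatically logarithmic}. Its proof is a short residue computation: writing $\kappa = (A/y)\,dx + \psi\,dy$ with $A$ holomorphic in $(x,y)$, the equation $d\kappa=0$ forces, after matching $1/y^2$-coefficients, the vanishing $A|_{y=0}=0$; so the $dx$-coefficient is in fact holomorphic and $\kappa$ has logarithmic shape.

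For the regular-singular equivalences, I would invoke Deligne's theorem that a meromorphic linear connection is regular-singular iff it is meromorphically gauge-equivalent to a logarithmic one. Interpreting meromorphic gauge transformations of $T_S$ as bimeromorphic $\PP^1$-bundle transformations $\psi:\Proj(T_S)\dashrightarrow P$ (built up from elementary transformations), and noting the covariance of the Riccati foliation, gives the equivalence of the first two items. The equivalence with the polynomial-growth condition is the classical characterization of regular-singularity: horizontal sections of $\nabla_\Ho$ have polynomial growth near $D$, and since solutions of the Riccati equation are projective ratios $z=g/f$ of components of horizontal sections, polynomial growth transfers in both directions, up to choice of local determination.

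The most delicate step I anticipate is the regular-singular dictionary, namely formalizing cleanly how meromorphic gauge transformations of $T_S$ correspond to bimeromorphic transformations of $\Proj(T_S)$, particularly when elementary transformations change the underlying bundle type. In the logarithmic part, apart from the short closed-plus-simple-pole lemma and the routine $z$-expansion bookkeeping, no essential obstacle is expected.
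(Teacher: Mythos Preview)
Your treatment of the logarithmic equivalences matches the paper's proof closely: both reduce to the dictionary \eqref{eq:FromConnectionToRiccati}--\eqref{eq:LiftRicToCon} between $\theta$ and $(\alpha,\beta,\gamma,\kappa)$, and both use $d\kappa=0$ to control the trace part. Your ``closed with at most simple pole implies logarithmic'' lemma is exactly the mechanism the paper invokes when it says ``the same formula together with $d\kappa=0$ shows that the same equivalence holds true for differential of coefficients.''

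In the regular-singular part there is a genuine gap. You route the polynomial-growth equivalence through $\nabla_\Ho$, asserting that growth ``transfers in both directions'' between horizontal sections of $\nabla_\Ho$ and Riccati solutions. The forward direction is fine, but the backward direction is not: two lifts of $\Ho$ differ by a scalar factor $\exp\!\bigl(\tfrac12\!\int\eta\bigr)$ on horizontal sections, and for the torsion-free lift one has $\eta=\kappa$. Although $\kappa$ is closed, nothing prevents its $dy$-component from having a pole of order $\ge 2$ when $\Ho$ is merely regular-singular (only in the logarithmic case does your lemma force $\kappa$ to be logarithmic). Then $\exp\!\bigl(\tfrac12\!\int\kappa\bigr)$ has an essential singularity, so $\nabla_\Ho$-horizontal sections can blow up exponentially even when Riccati solutions grow polynomially. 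The paper explicitly flags this: ``We do not know whether $\Ho$ regular-singular $\Rightarrow$ $\nabla_\Ho$ is regular-singular.'' Its proof circumvents the issue by lifting instead to the \emph{trace-free} connection $\nabla^0$ (setting $\theta_{11}+\theta_{22}=0$), for which the Wronskian of a fundamental matrix is constant; this constant-determinant constraint is precisely what lets polynomial growth of ratios force polynomial growth of the individual components. For the implication (c)$\Rightarrow$(b) the paper does not appeal to Deligne's theorem on meromorphic gauge equivalence as you propose, but rather to the Riccati normal forms of \cite{LP}; your Deligne route for (a)$\Leftrightarrow$(b) is reasonable, but you must be careful that the regular-singular connection it produces on $T_S$ is not $\nabla_\Ho$ but some other local $\nabla_i$ inducing $\Ho$.
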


There is a unique flat and torsion-free meromorphic connection 
$$\nabla_\Ho\ :\ T_S\to T_S\otimes\Omega^1_S(D)$$
lifting the flat structure $\Ho$ on $T_S$: it is defined in charts by identities
(\ref{eq:FromConnectionToRiccati}) and (\ref{eq:LiftRicToCon}). 
We do not know whether $\Ho$ regular-singular $\Rightarrow$ $\nabla_\Ho$ is regular-singular.

\begin{proof} Due to (\ref{eq:FromConnectionToRiccati}) and (\ref{eq:LiftRicToCon}),
we see that coefficients of $\omega$ have simple poles if, and only if, coefficients 
of the matrix connection (\ref{eq:MatrixConnection}) has simple poles. Moreover, 
the same formula together with $d\kappa=0$ shows that the same equivalence 
holds true for differential of coefficients, i.e. $d\theta$ has simple poles if, and only if,
$d\omega$ has simple poles. 

For the second part, we know (see \cite{Deligne}, or \cite{Novikov}) that $\nabla_\Ho$ 
is regular-singular singular points of linear connections are characterized by solutions 
having polynomial growth along the divisor $D$ (which needs not be reduced in that case),
and so must be those solutions of $\Ho$.
This growth property is invariant under bimeromorphic bundle transformation.
We can therefore assume that $\Ho$ is in normal form like in \cite[Theorem 1]{LP},
minimizing the pole order.
Then, it has simple poles otherwise it would have irregular singular points with exponential growth,
and poles can only be logarithmic since they cannot be erased by bimeromorphic bundle transformation.
Of course, logarithmic implies polynomial growth for Riccati equation (same computation as for linear connection).
Conversely, if solutions of $\Ho$ have polynomial growth then so are the solutions 
of the unique trace-free connection $\nabla^0$ lifting $\Ho$ in a local trivialization of $\Proj(T_S)$
(by setting $\theta_1+\theta_2=0$ instead of (\ref{eq:LiftRicToCon})).
\end{proof}

As a particular case of affine structures, we have those defined by a global pencil of foliations.
In the singular case, a singular pencil of foliation is defined by a pencil
$\omega_t=\omega_0-t\omega_\infty$ of global meromorphic $1$-forms, 
such that $\omega_0\wedge\omega_\infty\not\equiv0$.
In that direction, we have

\begin{prop}The following data are equivalent:
\begin{itemize}
\item a flat singular pencil on $S$, 
\item a torsion-free Riccati foliation on $\Proj(T_S)$ with regular singula\-ri\-ties and trivial monodromy.
\end{itemize}
\end{prop}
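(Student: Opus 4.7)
The plan is to establish both implications by leveraging the correspondence from section~\ref{ss:RiccatiVeronese} between local first integrals of a Riccati foliation and local pencils of foliations, then checking that under each set of extra hypotheses one can globalize the construction.

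For the direction starting with a flat singular pencil, I would choose global representatives $\omega_0,\omega_\infty$ with $d\omega_0=d\omega_\infty=0$ (always possible by the singular extension of the characterization listed at the end of section~\ref{ss:RiccatiVeronese}). Writing $\omega_\bullet = f_\bullet dx+g_\bullet dy$ in a trivialization of $\Proj(T_S)$, plug the functions $(f_0,g_0,f_\infty,g_\infty)$ into formula (\ref{eq:FirstIntegRiccati}) to produce a meromorphic function on $M$; globality of the $\omega_\bullet$ and the assumption $\omega_0\wedge\omega_\infty\not\equiv0$ (giving $\delta\not\equiv0$) make this first integral global, so the Riccati foliation $\Ho=\ker dF$ has trivial monodromy by construction. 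Combining (\ref{eq:FirstIntegToRicc}) with (\ref{eq:LiftRicToCon}) and using $d\omega_0=d\omega_\infty=0$ one checks directly that $d\kappa=0$, i.e.\ $\Ho$ is torsion-free. For regular-singularity, it suffices to remark that at a generic point of the polar divisor a closed meromorphic $1$-form admits a local primitive of the form (polynomial in $1/y$) $+$ (log term), so solutions $F=u_0/u_\infty$ of the Riccati grow polynomially along $D$, which is the third characterization in Proposition~\ref{prop:RiccLogReg}.

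For the converse, suppose $\Ho$ is a torsion-free, regular-singular Riccati foliation with trivial monodromy. Locally on $M$ outside $\pi^{-1}(D)$, $\Ho$ admits a first integral of the shape (\ref{eq:FirstIntegRiccati}), and any two local first integrals differ by a M\"obius transformation coming from the monodromy; triviality of the latter glues them into a single-valued holomorphic first integral on $M\setminus\pi^{-1}(D)$. The regular-singular hypothesis, via the bimeromorphic reduction to a logarithmic Riccati in Proposition~\ref{prop:RiccLogReg}, forces polynomial growth of $F$ along $D$, hence meromorphic extension to all of $M$. The level sets of this global meromorphic $F:M\dashrightarrow\PP^1$ are meromorphic sections of $\pi$; via the dictionary (\ref{eq:FirstIntegToPencil}) they assemble into a pencil $\omega_t=\omega_0-t\omega_\infty$ of global meromorphic $1$-forms on $S$. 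Torsion-freeness $d\kappa=0$ finally allows us to normalize $\omega_0,\omega_\infty$ to be closed, using the equivalent reformulations of flatness stated at the end of section~\ref{ss:RiccatiVeronese}; this exhibits the pencil as flat.

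The main obstacle is the meromorphic extension of the global first integral across the polar divisor in the converse direction. Trivial monodromy by itself only produces a single-valued holomorphic first integral on the complement of $\pi^{-1}(D)$, with no a priori control at $D$; nothing prevents essential singularities there. This is exactly where the regular-singular assumption enters, via the polynomial-growth criterion of Proposition~\ref{prop:RiccLogReg}: once growth is tamed, Riemann-type extension gives a meromorphic first integral on $M$. Beyond this step, both directions are essentially a bookkeeping exercise with the explicit formulas (\ref{eq:FirstIntegToRicc}), (\ref{eq:LiftRicToCon}) and (\ref{eq:FirstIntegToPencil}).
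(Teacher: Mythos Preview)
Your overall strategy matches the paper's: build a meromorphic first integral from the pencil to get trivial monodromy and regular-singularity in one direction, and use regular-singularity to extend horizontal sections meromorphically in the other. You also correctly isolate the main obstacle in the converse as the meromorphic extension across $D$.

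There is, however, a genuine gap in your forward direction. You assert that a flat singular pencil always admits \emph{global} closed meromorphic representatives $\omega_0,\omega_\infty$, citing a ``singular extension'' of the characterization at the end of section~\ref{ss:RiccatiVeronese} which the paper neither states nor proves, and which is in fact false. The local closed representatives of a flat pencil are pull-backs of $dx,dy$ by affine charts and transform under the \emph{linear} monodromy of the affine structure; the existence of a global meromorphic (non-closed) defining pair only forces the \emph{projective} monodromy to be trivial, so the linear monodromy may still be a non-trivial scalar character, obstructing globalization of the closed forms. Concretely, the pencil on $(\C^2,0)$ given by $\omega_0=dx+\nu x\,\frac{dy}{y}$ and $\omega_\infty=\nu\,\frac{dy}{y}$ with $\nu\notin\Z$ is a flat singular pencil since $y^\nu\omega_0=d(xy^\nu)$ and $y^\nu\omega_\infty=d(y^\nu)$, yet any closed defining pair is a constant multiple of $(d(xy^\nu),d(y^\nu))$ and carries monodromy $e^{2\pi i\nu}\neq 1$ around $y=0$.

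The repair is straightforward and is what the paper does implicitly: the first integral (\ref{eq:FirstIntegRiccati}) can be built from \emph{any} global meromorphic pair, closed or not, and this already yields trivial monodromy and meromorphic horizontal sections (hence regular-singularity, since the slopes $z_t$ of the global foliations $\Fo_t$ are then visibly meromorphic). The torsion-free check $d\kappa=0$ is a local condition and can be verified using the locally existing closed representatives that flatness guarantees. Once you drop the unjustified global-closedness claim, your argument goes through and coincides with the paper's.
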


\begin{proof}
A singular pencil, with singular locus $D$ given by poles and zeroes of $\omega_0\wedge\omega_\infty$, 
clearly implies the existence of a Riccati foliation on $\Proj(T_U)$ where $U=S\setminus D$. 
By construction, the Riccati foliation admits a meromorphic first integral given by (\ref{eq:FirstIntegRiccati})
and therefore extends with regular-singular points. Conversely, a Riccati foliation with trivial monodromy 
defines a pencil of foliations outside the polar locus; if it is regular-singular, then these foliations (horizontal sections)
extend meromorphically on $S$, to form a global pencil. 
\end{proof}

We will study Riccati foliations at the neighborhood of a point $p\in D$ such that $D$ is smooth at $p$
and logarithmic (or regular-singular).
We will use the following classical result (see \cite[Proposition 1.1.16]{Cousin} or \cite[Theorem 1.6]{Novikov}):

\begin{prop}\label{P:gaugetransformation} 
Let $D=\{y = 0\}$ be a smooth divisor of $(\C^2, 0)$ and $\Ho$
be a Riccati foliation on $\Proj^1 \times (\C^2, 0)\rightarrow (\C^2, 0)$, having
polar divisor $D$. If $\Ho$ is logarithmic (resp. regular-singular) along $D$,
then, up to biholomorphic (resp. bimeromorphic) bundle transformation,
we can assume that the foliation $\Ho$ is given 
by one of the following equations:
\begin{enumerate}
\item $dz = \lambda z\frac{dy}{y}$, $\lambda\in \C$, or
\item $dz = (nz + y^n)\frac{dy}{y}$, $n\in \Z_{\geq 0}$. 
\end{enumerate}
\end{prop}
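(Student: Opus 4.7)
The plan is to lift the Riccati foliation to a trace-free rank-two linear connection and then normalize it by the standard gauge-theoretic arguments for flat logarithmic connections. Using \eqref{eq:FromConnectionToRiccati} together with the trace-free condition $\theta_{11}+\theta_{22}=0$, the foliation $\Ho$ lifts, in the given trivialization of $\Proj(T_S)$, to a meromorphic connection $\nabla=d+\Theta$ on a rank-two bundle over $(\C^2,0)$ whose matrix inherits simple poles along $D=\{y=0\}$. In the regular-singular case one first reduces to the logarithmic setting by a bimeromorphic bundle transformation (Deligne canonical extension). Writing $\Theta=A\,\tfrac{dy}{y}+B\,dx+C\,dy$ with $A,B,C$ trace-free matrices holomorphic in $(x,y)$, flatness $d\Theta+\Theta\wedge\Theta=0$ yields, from the coefficient of $\tfrac{dx\wedge dy}{y}$, the Lax identity $\partial_xA=[A,B]$. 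Restricting to $y=0$, the residue $R(x):=A(x,0)$ satisfies $R'(x)=[R(x),B(x,0)]$, so its spectrum is constant in $x$.

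Since the conjugacy class of $R(x)$ is constant, a holomorphic change of frame depending only on $x$ conjugates $R$ to one of two Jordan forms: (a) the diagonal $R=\mathrm{diag}(\lambda,-\lambda)$ with $\lambda\in\C$ constant, or (b) the constant nilpotent $\bigl(\begin{smallmatrix}0&1\\0&0\end{smallmatrix}\bigr)$. The heart of the proof is then to kill higher-order $y$-terms by iterated gauge transformations. Expanding $A=R+\sum_{k\ge 1}y^kA^{(k)}(x)$ and applying a gauge $\mathrm{id}+y^kQ(x)$ with $Q(x)\in\mathfrak{sl}_2$, the leading change in $A^{(k)}$ is the cohomological operator $Q\mapsto[R,Q]+kQ$. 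On $\mathfrak{sl}_2$ in case (a) this operator has eigenvalues $k,\ 2\lambda+k,\ -2\lambda+k$, hence is invertible unless $k=\pm 2\lambda$; the flatness relations at each order propagate the simplification from $A^{(k)}$ to the accompanying $B^{(k)}$ and $C^{(k)}$ pieces.

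When $2\lambda\notin\Z_{\ge 1}$ (after swapping $\lambda\leftrightarrow-\lambda$ if needed), every obstruction can be killed and one reaches $\Theta=R\,\tfrac{dy}{y}$; via \eqref{eq:FromConnectionToRiccati} this is the Riccati form $dz=2\lambda\,z\,\tfrac{dy}{y}$ of case~(1). In the resonant case $2\lambda=n\in\Z_{\ge 1}$, the kernel of $[R,\cdot]+n$ on $\mathfrak{sl}_2$ is the line spanned by $\bigl(\begin{smallmatrix}0&0\\1&0\end{smallmatrix}\bigr)$; if this obstruction vanishes we are still in case (1), whereas if it survives, a careful iteration reduces it to the single term $c\,y^n\bigl(\begin{smallmatrix}0&0\\1&0\end{smallmatrix}\bigr)\,\tfrac{dy}{y}$ with $c\neq 0$, which a constant diagonal gauge rescales to $c=1$. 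Translating via \eqref{eq:FromConnectionToRiccati} then gives case~(2) $dz=(nz+y^n)\,\tfrac{dy}{y}$; case (b) (nilpotent residue) falls into the same scheme with $n=0$.

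The main obstacle is the $x$-dependent bookkeeping: at each inductive step one must solve the cohomological equation $([R,\cdot]+k)Q(x)=-A^{(k)}(x)$ with $Q(x)$ holomorphic in $x$. This is guaranteed by the flatness identities expanded order by order in $y$, which force the obstruction $A^{(k)}(x)$ to be the restriction to $y=0$ of a holomorphic family, together with the fact that $[R,\cdot]+k$ is a constant operator on $\mathfrak{sl}_2$ once $R$ has been made $x$-independent; in the regular-singular reduction the analogous statement holds with meromorphic $Q(x)$. The detailed rank-two implementation of this $x$-parameter normalization is precisely the content of \cite[Proposition 1.1.16]{Cousin} and \cite[Theorem 1.6]{Novikov}, which cover both the holomorphic and meromorphic settings.
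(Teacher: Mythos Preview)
The paper does not give its own proof of this proposition: it records the statement as a classical result and defers entirely to \cite[Proposition 1.1.16]{Cousin} and \cite[Theorem 1.6]{Novikov}. Your proposal correctly outlines the standard gauge-theoretic normalization carried out in those references---trace-free $\mathfrak{sl}_2$-lift, constancy of the residue spectrum via the Lax relation $R'=[R,B]$, and order-by-order elimination through the cohomological operator $[R,\cdot]+k\cdot\mathrm{id}$---and you close by invoking the same two sources for the detailed $x$-parametric implementation, so your treatment is fully in line with the paper's.
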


\section{Local classification of torsion-free Riccati foliations with logarithmic pole}

Let us start with Lemmae about classification of local multivalued functions and $1$-forms
in dimension one. The first one is very classical.

\begin{lemma}\label{L:multivaluedfunction} 
Let $f$ be a (possibly multivalued) function on $(\C,0)$ of the form $f(x)=x^\nu h(x)$ where $\nu\in\C^*$
and $h(x)$ holomorphic and non vanishing at $0$.
Then $f$ is conjugated to $x^{\nu}$, i.e.  $f(x)=\varphi^*x^\nu:=\left(\varphi(x)\right)^\nu$ for some local diffeomorphism $\varphi$.
\end{lemma}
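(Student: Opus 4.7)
The plan is straightforward: I want to exhibit $\varphi$ explicitly by pulling an appropriate $\nu$-th root of $h$ into the argument of the power function.

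First I would fix a local branch of the logarithm of $h$. Since $h$ is holomorphic on $(\C,0)$ with $h(0)\neq 0$, there is a holomorphic function $g$ on a neighborhood of $0$ such that $h(x)=e^{g(x)}$. This is the standard consequence of $h(\C,0)\subset\C^*$ being simply connected on a contractible neighborhood.

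Next I would set
\[
\varphi(x):=x\cdot e^{g(x)/\nu}.
\]
This is holomorphic, with $\varphi(0)=0$ and $\varphi'(0)=e^{g(0)/\nu}\neq 0$ (using $\nu\in\C^*$ to divide by $\nu$). Hence $\varphi$ is a local biholomorphism of $(\C,0)$.

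Finally I would check the identity. Choosing the branch of $x^\nu$ compatible with the chosen logarithm, one has $\varphi(x)^\nu = x^\nu \cdot e^{g(x)} = x^\nu h(x) = f(x)$, so $f=\varphi^*(x^\nu)$ as claimed. The only subtle point is book-keeping for the multivaluedness: the statement implicitly selects a branch of $x^\nu$, and the same branch is then used to interpret $\varphi(x)^\nu = e^{\nu \log \varphi(x)}$ with $\log\varphi(x)=\log x + g(x)/\nu$. No step looks hard; the "main obstacle," if any, is simply making sure the chosen branches of $\log h$ and of $x^\nu$ are compatible, but this is handled once and for all by the formula $\varphi(x)^\nu=e^{\nu\log x + g(x)}$.
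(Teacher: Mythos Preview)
Your proof is correct and follows essentially the same approach as the paper: both write $\varphi(x)=x\,u(x)$ and solve $u^\nu=h$ by taking $u=\exp(\nu^{-1}\log h)$, using that $h$ is nonvanishing so a holomorphic logarithm exists. Your version is slightly more explicit in checking $\varphi'(0)\neq 0$ and in tracking the branch compatibility, but the argument is the same.
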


\begin{proof}
If we write $\varphi(x)=x u(x)$ with $u(x)$ holomorphic and non vanishing on $(\C,0)$, then 
conjugacy $f=\varphi^*x^\nu$ is reduces to $h=u^\nu$. Since $h$ is non vanishing, 
there is a local holomorphic determination of $\log(h)$, and a solution is given by 
$u(x)=\exp(\nu^{-1}\log(h(x)))$.
\end{proof}

For multivalued 1-forms, we have:

\begin{lemma}\label{L:multivaluedform} 
Let $\omega=x^\nu u(x)dx$ be a multivalued $1$-form on $(\C,0)$ where
$\nu\in\C$ and $u(x)$ holomorphic and non vanishing at $0$.
\begin{itemize}
\item If $\nu\notin \Z_{<0}$, then $\omega$ is conjugated to $x^\nu dx$, i.e. 
$\omega=\varphi^*(x^\nu dx)$ for some local diffeomorphism $\varphi(x)$.
\item If $\nu\in \Z_{<0}$, then $\omega$ is meromorphic. Denote by 
$n=-\nu\in\Z_{>0}$ its pole order, and by $\lambda\in\C$ its residue at $0$. Then
\begin{enumerate} 
\item If $n=1$, then $\omega$ is conjugated to $\lambda \frac{dx}{x}$.   
\item If $n>1$, then $\omega$ is conjugated to 
$\frac{dx}{x^n}+\lambda\frac{dx}{x}$. 
\end{enumerate}
\end{itemize} 
\end{lemma}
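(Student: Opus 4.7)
The plan is to construct, case by case, a local diffeomorphism $\varphi(x)=xv(x)$ with $v$ holomorphic and $v(0)\neq 0$ realizing the conjugacy $\varphi^*(\omega_{\mathrm{norm}})=\omega$. Begin with the generic case $\nu\notin\Z_{<0}$. Computing $\varphi^*(x^\nu dx)=x^\nu v^\nu(v+xv')\,dx$ reduces the conjugacy to the equation $v^{\nu+1}+xv^\nu v'=u$. Setting $w=v^{\nu+1}$, this linearizes to $(\nu+1)w+xw'=(\nu+1)u$, whose formal power series solution is $w_k=\frac{(\nu+1)u_k}{\nu+1+k}$, well-defined precisely when $\nu+1+k\neq 0$ for every $k\in\Z_{\geq 0}$, i.e.\ under the hypothesis. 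The denominators are bounded below for $k$ large, so the series converges on any disc on which $u$ does; since $w(0)=u(0)\neq 0$, a local branch of $v=w^{1/(\nu+1)}$ is well-defined, producing the required $\varphi$.

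The meromorphic case $\nu=-n$ splits in two. For $n=1$, the residue $\lambda=u(0)\neq 0$ is the only invariant, and an explicit solution is
$$\varphi(x)=x\exp\!\left(\int_0^x\frac{u(t)-\lambda}{\lambda\, t}\,dt\right),$$
where the integrand is holomorphic because $u(t)-\lambda=O(t)$; a direct check gives $\lambda\,d\varphi/\varphi=\omega$. For $n>1$ the strategy is to work at the level of primitives. Since $\omega-\lambda\frac{dx}{x}$ has no residue it is exact: $\omega-\lambda\frac{dx}{x}=dG(x)$ with $G(x)=-\tilde\psi(x)/\big((n-1)x^{n-1}\big)$ for some holomorphic $\tilde\psi$ with $\tilde\psi(0)=u_0\neq 0$. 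The target normal form admits the multivalued primitive $-1/\big((n-1)y^{n-1}\big)+\lambda\log y$, so writing $\varphi=xv$ and matching primitives (the integration constant is absorbed into $\tilde\psi$) yields, after multiplication by $-(n-1)x^{n-1}$, the implicit equation
$$\frac{1}{v(x)^{n-1}}-\lambda(n-1)\,x^{n-1}\log v(x)=\tilde\psi(x).$$
Fixing a local branch of $\log v$ in a neighborhood of $v_0:=\tilde\psi(0)^{-1/(n-1)}$, the left-hand side is holomorphic in $(x,v)$ near $(0,v_0)$ and its $v$-derivative there equals $-(n-1)/v_0^n\neq 0$; the implicit function theorem then provides the required holomorphic $v(x)$, hence $\varphi$.

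The main obstacle is the subcase $n>1$ with $\lambda\neq 0$: the logarithmic term forces one to pass to primitives and to choose a branch of $\log v$ carefully before invoking the implicit function theorem. The other situations reduce either to an explicit linear recursion (case 1), to a direct integration ($n=1$), or to the extraction of an $(n-1)$-th root of a unit ($n>1$ with $\lambda=0$, which is in fact a special case of the implicit function argument).
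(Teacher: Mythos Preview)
Your proof is correct. For the meromorphic cases $n=1$ and $n>1$ it coincides with the paper's argument almost line for line: explicit integration for the simple pole, and passage to primitives followed by the implicit function theorem for the higher-order pole (the paper normalizes by a homothecy so that $\tilde\psi(0)=1$ and $v_0=1$, but this is cosmetic).

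The one genuine difference is in the case $\nu\notin\Z_{<0}$. The paper writes $\varphi(x)=x\exp(g(x))$, derives the nonlinear Briot--Bouquet equation $x\,g'=e^{-(\nu+1)g}u(x)-1$, and invokes the Briot--Bouquet theorem to produce a holomorphic solution under the eigenvalue condition $\nu+1\notin\Z_{\le0}$. Your substitution $w=v^{\nu+1}$ linearizes the problem to the Euler equation $(\nu+1)w+xw'=(\nu+1)u$, whose solution is read off coefficientwise as $w_k=\frac{(\nu+1)u_k}{\nu+1+k}$; this is more elementary, since it replaces a black-box existence theorem by an explicit formula, and it makes the role of the arithmetic condition on $\nu$ completely transparent. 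The paper's route, on the other hand, avoids having to pass back from $w$ to $v$ via a $(\nu+1)$-th root. Both arguments identify the same obstruction $\nu+1+k=0$.
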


\begin{proof}Assume first $\nu\notin \Z_{<0}$. Then $\nu+1\not=0$, and we can rescale $x$
by an homothecy to set $u(0)=1$. Write $\varphi(x)=x\exp(g(x))$ and substitute in $\omega=\varphi^*(x^\nu dx)$.
Then we find:
$$u(x)=e^{(\nu+1)g(x)}(1+x g'(x)),$$
which means that $y=g(x)$ is solution of the differential equation
$$x\frac{dy}{dx}=e^{-(\nu+1)y}u(x)-1.$$
The right-hand-side expands as:
$$x\frac{dy}{dx}=u_1 x-(\nu+1)y+\text{h.o.t.}$$
By Briot-Bouquet, there exists a holomorphic solution $y=g(x)$ with initial condition $g(0)=0$ 
provided that $\nu+1\not\in\Z_{\le0}$, which proves the first part of the statement.

Assume now that $\omega$ is meromorphic (but non holomorphic). 
If $n=1$, then write $\omega=\lambda\frac{dx}{x}\left(1+h(x)\right)$ with $h(x)$ holomorphic 
and vanishing at $0$. Then write $\varphi(x)=x u(x)$ with $u(x)$ holomorphic and non vanishing at $0$.
Then $\omega=\varphi^*(\lambda\frac{dx}{x})$ is equivalent to $\frac{du}{u}=h\frac{dx}{x}$
which can be solved by setting $u(x):=\exp\int h(x)\frac{dx}{x}$.

If $n>1$, then taking appart the residual part, we can write 
$$\omega=-d\left(\frac{h(x)}{(n-1)x^{n-1}}\right)+\lambda\frac{dx}{x}$$
with $h(x)$ holomorphic and non vanishing. Moreover, up to homothecy, we can assume
$h(0)=1$. Then write $\varphi(x)=x u(x)$, and conjugacy equation yields 
$$-d\left(\frac{h(x)}{(n-1)x^{n-1}}\right)=\frac{d\varphi}{\varphi^n}+\lambda\frac{du}{u}$$
(we have simplified residues). After integration, we get the functional equation:
$$h=\frac{1}{u^{n-1}}-(n-1)\lambda x^{n-1}\log(u).$$
Considering the main determination of $\log(u)$, the right-hand-side is holomorphic and vanishes
at $(x,u)=(0,1)$, and has non zero derivative along $\partial_u$, so that Implicit Function Theorem
provides a holomorphic solution $u(x)$ with $u(0)=1$.
\end{proof}

Let us now consider a logarithmic affine structure on a surface $S$, and let $D$ be the (reduced) polar
divisor. The affine structure is defined by a torsion-free Riccati foliation on $\Proj(T_S)$, and at the
neighborhood of any point $p\in S\setminus D$, by a flat pencil of foliations defined by 
$$\omega_t=\omega_0+t\omega_\infty\ \ \ \text{where}\ \ \ d\omega_0= d\omega_\infty=0,\ \ \ \text{and}\ \omega_0\wedge \omega_\infty\not=0.$$
Moreover, the vector space $V:=\C\langle\omega_0,\omega_\infty\rangle$ does not depend on any choice
and the analytic continuation of $\omega_0,\omega_\infty$ around branches of the divisor $D$
gives rise to a representation 
$$\pi_1(S\setminus D,p)\to \mathrm{GL}(V),$$
the linear monodromy of the affine structure. In particular, the $1$-forms $\omega_0,\omega_\infty$
do not extend, even meromorphically, along $D$, but define multivalued $1$-forms around.
Consider an irreducible component $D_0$ of $D$. Then, the conjugacy class of the local monodromy
around $D_0$ is well-defined, and might not fix any non zero $1$-form. However, in the logarithmic
case, some foliations of the pencil must extend as explained below:

\begin{prop} \label{P:multiformclosed} 
Let $\Ho$ be a torsion-free Riccati foliation on $\Proj(T_S)$ and $p\in D$
be a point on the smooth part of (the support of) $D$. Assume that the structure is regular-singular 
at $p$. Then the projective monodromy has at least one fixed point, and each fixed point gives rise 
to a non trivial $1$-form $\omega\in V$ with multiplicative monodromy around $D$, and the corresponding foliation
$\Fo_\omega$ extends as a (possibly singular) holomorphic foliation on $(S,p)$. 

Moreover, if $\Fo_\omega$ is transversal to $D$ at $p$, then $\omega$ extends as 
a closed holomorphic (and non vanishing) $1$-form at the neighborhood of $p$.
\end{prop}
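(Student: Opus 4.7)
The plan is to handle the three assertions in order, the first two being algebraic consequences of the fact that $V$ is a two-dimensional complex vector space. The monodromy of the affine structure around $D$ acts on $V$ as an element of $\mathrm{GL}(V)\simeq\mathrm{GL}_2(\C)$, and since $\C$ is algebraically closed it has an eigenvector, giving a monodromy-invariant line $\C\omega\subset V$. This is exactly a fixed point of the projective monodromy, and by construction $\omega\mapsto c\,\omega$ with $c\in\C^*$ the corresponding eigenvalue, so $\omega$ has multiplicative monodromy. The kernel $\Fo_\omega=\ker\omega$ is then a well-defined regular foliation on the complement of $D$ near $p$, since the scaling by $c$ does not affect the kernel.

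To extend $\Fo_\omega$ as a possibly singular holomorphic foliation across $D$, I would apply Proposition \ref{P:gaugetransformation}: under a bimeromorphic bundle transformation $\psi$ of $\Proj(T_S)$, we may assume $\Ho$ is given by one of the normal forms $dz=\lambda z\,dy/y$ or $dz=(nz+y^n)\,dy/y$. In either model the $\Ho$-invariant sections over $(S,p)$ are explicit, namely $\{z=0\}$ and $\{z=\infty\}$ in the first case and $\{z=\infty\}$ in the second, and they correspond bijectively to fixed points of the projective monodromy (a single-valued invariant section of $\Ho$ is precisely a monodromy-invariant solution of the Riccati equation). Reversing $\psi$ then produces a meromorphic section of $\Proj(T_S)\to(S,p)$ extending the graph of $\Fo_\omega$, and the resulting possibly singular holomorphic foliation extends $\Fo_\omega$ across $D$. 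This step is the main obstacle I foresee: one must track the explicit invariant section through $\psi^{-1}$, whose isolated indeterminacies account for the singular points of the extended foliation.

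For the last assertion, assume $\Fo_\omega$ is transversal to $D$ at $p$. Then $\Fo_\omega$ is smooth at $p$ with tangent direction complementary to $T_pD$, so we may choose coordinates $(x,y)$ with $D=\{y=0\}$ and $\Fo_\omega=\{dx=0\}$, and write $\omega=g(x,y)\,dx$. Under monodromy $g\mapsto c\,g$, and the regular-singular hypothesis (applied to the dual connection on $\Omega^1_S$) implies $g$ has polynomial growth along $D$. Writing $c=e^{2\pi i\mu}$, the quotient $g\cdot y^{-\mu}$ is single-valued with polynomial growth, hence meromorphic, so $g=y^\mu G(x,y)$ with $G$ meromorphic. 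The closure relation $d\omega=0$ reduces to $\partial_y g=0$; combined with the meromorphy of $G$, a Laurent expansion in $y$ forces $\mu\in\Z$ and $g$ to be independent of $y$. Hence $g$ is single-valued (so $c=1$) and depends only on $x$, and being meromorphic on $(\C^2,0)$ with poles at most along $\{y=0\}$ it must be holomorphic on $(\C,0)$; transversality at $p$ gives $g(0)\ne 0$, so $\omega=g(x)\,dx$ is a non-vanishing holomorphic closed $1$-form at $p$.
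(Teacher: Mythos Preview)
Your argument is correct and follows essentially the same route as the paper: existence of a fixed point via an eigenvector of the linear monodromy, meromorphic extension of the corresponding $\Ho$-invariant section across $D$ by regular-singular (Fuchs) theory, and finally the closedness argument in adapted coordinates for the transversal case. Two small remarks. First, in the last step your detour through polynomial growth and the decomposition $g=y^\mu G$ is unnecessary: once you write $\omega=g(x,y)\,dx$ with $D=\{y=0\}$, closedness alone gives $\partial_y g=0$, so $g=g(x)$ is single-valued and holomorphic (the monodromy around $y=0$ does not move $x$), which is exactly how the paper argues. Second, your justification ``transversality at $p$ gives $g(0)\ne0$'' is not quite right: transversality of $\Fo_\omega=\ker(dx)$ to $D=\{y=0\}$ holds regardless of whether $g$ vanishes. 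The correct reason, as the paper notes, is that if $g(0)=0$ then $\omega$ vanishes along $\{x=0\}\not\subset D$, so $\omega_0\wedge\omega_\infty$ would vanish there, contradicting the regularity of the affine structure outside $D$.
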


\begin{proof} The monodromy of the affine structure is the projectivization of the linear monodromy
of acting on $V$ as above. The projective monodromy around $D$ at $p$ is a Moebius transformation
which has one or two fixed points, or is the identity. Each fixed point corresponds to a foliation $\Fo$
of the pencil which is uniform around $D$. It also correspond to a local section of $\Proj(T_S)$
outside of $D$ that is invariant by the Riccati foliation (i.e. a leaf). By Fuchs Theory (see also \cite{Deligne}),
because the Riccati equation is regular-singular, this section extends meromorphically along $D$,
therefore defining a singular foliation extending $\Fo$ along $D$. 

Finally, if $\Fo$ is transversal to $D$, we can write locally $\Fo=\ker(dy)$ and $D=[x=0]$
in local coordinates $(x,y)$ at $p$.
Therefore, we can write $\omega=f(x,y)dy$ for a multivalued function $f$. But $\omega$ is closed,
so $f(x,y)=f(y)$ with $f$ holomorphic near $y=0$; we promptly deduce that $\omega$ extends at the origin.
The function $f$ does not vanish at $0$, otherwise it would not define a regular pencil along $y=0$,
contradicting that the structure is regular outside $x=0$.
\end{proof}

We are now going to solve the local classification problem at a generic point of a branch $D_0$ of $D$
where the local projective monodromy has two fixed points.

\begin{thm} \label{T:affinestructure}
Consider a logarithmic affine structure on $S$ with polar divisor $D$
and let $D_0$ be a branch of $D$ with semi-simple local monodromy.
Then, at a generic point of $D_0$, the affine structure is described 
by one of the model below:
\begin{itemize}
\item $\omega_t= dx + ty^{\nu}dy$, $\nu\in \C^*$,
\item or $\omega_t= dx + t(\frac{dy}{y^{n}}+\frac{dy}{y})$, $n\in \Z_{>1}$.  
\end{itemize} 
\end{thm}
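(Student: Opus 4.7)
By semisimplicity of the local monodromy around $D_0$, Proposition~\ref{P:multiformclosed} provides two closed (possibly multi-valued) $1$-forms $\omega_0,\omega_\infty\in V$ spanning the local pencil, whose associated foliations $\Fo_0,\Fo_\infty$ extend as singular holomorphic foliations near $p\in D_0$. I first argue that at a generic $p$, exactly one of them is transverse to $D_0$ (the other tangent). ``Both transverse'' is impossible: Proposition~\ref{P:multiformclosed} would then yield $\omega_0,\omega_\infty$ both closed, holomorphic, non-vanishing at $p$, hence flat affine charts around $p$ and no pole at $p$---contradicting $p\in D$. ``Both tangent'' is also impossible: in the semisimple logarithmic case, Proposition~\ref{P:gaugetransformation} makes the Riccati biholomorphic to $dz=\lambda z\,dy/y$, whose two fixed sections $z=0$ and $z=\infty$ project to two distinct directions at any point of $D_0=\{y=0\}$ and cannot both coincide with the unique direction tangent to $D_0$.

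\textbf{Setup.} Relabeling so that $\Fo_0$ is transverse, Proposition~\ref{P:multiformclosed} makes $\omega_0$ extend as a closed, holomorphic, non-vanishing $1$-form; I choose local coordinates $(x,y)$ with $\omega_0=dx$ and $D_0=\{y=0\}$. Writing $\omega_\infty=dF$ for a multi-valued primitive $F$ normalized to vanish along $\{y=0\}$ (possible since $\Fo_\infty$ has $D_0$ as a leaf), the classification reduces to bringing $F$ into standard form by substitutions $(x,y)\mapsto(x,\,\tilde y=h(x,y))$ preserving $dx$ and $\{\tilde y=0\}=\{y=0\}$, together with constant rescalings of $\omega_\infty$ and $\tilde y$.

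\textbf{Analysis by monodromy type.} If $\omega_\infty$ has nontrivial multiplicative monodromy $\lambda_\infty=e^{2\pi i\mu}$, then $F=y^\mu\Phi(x,y)$ with $\Phi$ single-valued holomorphic and non-vanishing at the generic $p$; the substitution $\tilde y=y\,\Phi(x,y)^{1/\mu}$ straightens $F$ to $\tilde y^\mu$, giving $\omega_\infty=\mu\,\tilde y^{\mu-1}d\tilde y$---Model~1 with $\nu=\mu-1\in\C\setminus\Z$. If $\lambda_\infty=1$, then $\omega_\infty$ is single-valued meromorphic with constant residue $c$. When $c=0$, the primitive $F$ is single-valued of the shape $y^\mu\Phi$ with integer $\mu\in\Z\setminus\{0,1\}$, and the analogous substitution yields Model~1 with integer $\nu=\mu-1\neq 0,-1$. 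When $c\neq 0$ and the pole is simple ($n=1$), the substitution $\tilde y=y\,e^{G/c}$, with $G$ the holomorphic part of $F$, produces $\omega_\infty=c\,d\tilde y/\tilde y$---Model~1 with $\nu=-1$. In each case, Lemma~\ref{L:multivaluedform} applied to the restriction of $\omega_\infty$ to a transversal at $p$ confirms the candidate normal form.

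\textbf{Main obstacle.} The remaining case---$\lambda_\infty=1$, $c\neq 0$, pole order $n>1$---is the only one producing Model~2 and the most delicate step. Here $F=c\log y+y^{-(n-1)}H(x,y)$ mixes a logarithmic and a higher-polar component which must be straightened \emph{simultaneously} by a single substitution $\tilde y=yK(x,y)$ so that $F=c\log\tilde y-\frac{1}{(n-1)\tilde y^{n-1}}$. Matching coefficients leads to the implicit equation
\[
\frac{1}{(n-1)\,K^{n-1}} \;=\; c\,y^{n-1}\log K \;-\; H(x,y),
\]
with boundary value $K(x,0)^{n-1}=-1/((n-1)H(x,0))$; the implicit function theorem applies at the generic $p$ because $H(p)\neq 0$. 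A final pair of constant rescalings of $\tilde y$ and $\omega_\infty$ normalizes the residue and the leading coefficient to~$1$, producing $\omega_\infty=d\tilde y/\tilde y^n+d\tilde y/\tilde y$---Model~2.
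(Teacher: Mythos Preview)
There is a genuine gap in your case analysis, arising precisely when the local monodromy around $D_0$ is trivial. This is a semi-simple case that the theorem must cover --- it is exactly what produces Model~1 with $\nu\in\Z\setminus\{0\}$ and all of Model~2.

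Your argument that ``both tangent is impossible'' identifies $\Fo_0,\Fo_\infty$ with the sections $z=0$ and $z=\infty$ of the normal form $dz=\lambda z\,dy/y$ from Proposition~\ref{P:gaugetransformation}. This identification is forced only when $\lambda\notin\Z$: then $z=0,\infty$ are the \emph{unique} $\Ho$-invariant sections, and since a biholomorphic gauge is a fibrewise bijection it preserves disjointness, so the two eigen-foliations indeed have distinct directions at every point of $D_0$. But when $\lambda\in\Z_{>0}$ every section $z=cy^\lambda$ is invariant, and all of them pass through $z=0$ along $y=0$; any two give eigen-foliations that are \emph{both} tangent to $D_0$. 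This case genuinely occurs and is treated in the paper as Lemma~\ref{L:Pencil4}, using the bifoliation normal form of Proposition~\ref{Prop:Tang}; only after a change of basis in the pencil (legitimate because the monodromy is scalar) does one return to the one-transverse/one-tangent configuration.

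Similarly, your argument that ``both transverse is impossible'' jumps from ``$\omega_0,\omega_\infty$ holomorphic and non-vanishing'' to ``the structure is regular at $p$''. This needs $\omega_0\wedge\omega_\infty\neq0$ at $p$, which fails exactly when $\Fo_0$ and $\Fo_\infty$ are tangent \emph{to each other} along $D_0$ while each remains transverse to $D_0$. Again this forces the two invariant sections to meet over $D_0$, hence $\lambda\in\Z$, and the paper handles it separately in Lemma~\ref{L:Pencil3}, obtaining $\omega_t=dx+t\,d(y^n)$, i.e.\ Model~1 with $\nu=n-1\in\Z_{\ge1}$.

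Your normalization in the one-transverse/one-tangent case is essentially Lemma~\ref{L:Pencil1} rewritten via the primitive $F$ rather than via Lemma~\ref{L:multivaluedform}, and is fine (modulo making precise why $F=y^\mu\Phi$ with $\Phi$ a holomorphic unit, which follows from the logarithmic hypothesis). But the two missing trivial-monodromy configurations are not vacuous and require the non-trivial separate treatment the paper gives; without them the proof is incomplete.
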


In the statement, by generic point, we mean outside of a discrete set of points along $D$.

\begin{proof}  By assumption, the projective monodromy around $D_0$ has at least 2 fixed points.
Applying Proposition \ref{P:multiformclosed} to these two fixed points, we get that 
the affine structure is defined around any point $p\in D_0$ which is smooth for $D$
by the pencil $\omega_t=\omega_0+t\omega_\infty$ where $\omega_0,\omega_\infty$ are
multivalued closed $1$-forms with multiplicative monodromy $\omega_i\mapsto c_i\omega_i$
around $D_0$, $c_i\in\C^*$, $i=0,\infty$. Moreover, the corresponding foliations $\Fo_0,\Fo_\infty$
extend as (possibly singular) holomorphic foliations along at $(S,p)$. Moreover, the linear monodromy 
around $D_0$ is given in the basis $(\omega_0,\omega_\infty)$ by
$\begin{pmatrix}c_0&0\\ 0&c_\infty\end{pmatrix}$; if $c_0\not=c_\infty$ then 
no other foliation $\Fo_t:\{\omega_t=0\}$ of the pencil is preserved by the monodromy.
At a generic point $p\in D_0$, these two foliations are smooth (non singular) and 
each of them is either transversal to $D$, or $D$ is a local leaf of the foliation.
In other words, we exclude special points where foliations are singular, or have
isolated tangencies with $D$, or extra tangency between them. 
Then, we have $4$ possibilities for $(\Fo_0,\Fo_\infty)$ up to permutation:
\begin{itemize}
\item $\Fo_0$ and $\Fo_\infty$ are transversal, and both transversal to $D_0$.
\item $\Fo_0$ and $\Fo_\infty$ are transversal, and $D_0$ is a leaf of $\Fo_0$.
\item $D_0$ is a common leaf of $\Fo_0$ and $\Fo_\infty$, and these foliations are transversal outside of $D_0$.
\item $\Fo_0$ and $\Fo_\infty$ are tangent along $D_0$, but are transversal to $D_0$, 
and are transversal outside of $D_0$.
\end{itemize}
The proof ends by a studying each of these $4$ cases separately.

\begin{lemma} \label{L:Pencil2} 
If $\Fo_0$ and $\Fo_\infty$ are transversal, then the polar divisor $D$ of the structure 
must be invariant by either $\Fo_0$, or $\Fo_\infty$.
\end{lemma}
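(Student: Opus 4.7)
The plan is a proof by contradiction, exploiting the fourfold case analysis made immediately before the lemma. Under the transversality hypothesis of the lemma, we are in case (a), (b) or (c). In cases (b) and (c), the branch $D_0$ is, by definition, a leaf of $\Fo_0$ (and of $\Fo_\infty$ in case (c)); since at a generic $p\in D_0$ the divisor $D$ coincides locally with $D_0$, the conclusion holds immediately. The entire content of the lemma is therefore to rule out case (a), in which both $\Fo_0$ and $\Fo_\infty$ are transversal to each other \emph{and} transversal to $D_0$ at the chosen generic point $p$.

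In this remaining case I would apply Proposition \ref{P:multiformclosed} separately to each of the two projective fixed points. Transversality of the corresponding foliation to $D_0$ is exactly the hypothesis of that proposition, and its conclusion is that both $\omega_0$ and $\omega_\infty$ extend across $D_0$ as closed, holomorphic, non-vanishing $1$-forms at $p$. In particular both multiplicative factors $c_0, c_\infty$ equal $1$ and the local monodromy around $D_0$ is trivial.

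To conclude, since $\omega_0\wedge\omega_\infty\neq 0$ at $p$ (transversality of $\Fo_0$ and $\Fo_\infty$), the primitives $u=\int\omega_0$ and $v=\int\omega_\infty$ are local holomorphic coordinates centred at $p$ in which the pencil becomes the standard linear one $\omega_t=du+t\,dv$. Plugging $f_0=1,\,g_0=0,\,f_\infty=0,\,g_\infty=1$ into formula \eqref{eq:FirstIntegToRicc} gives $\alpha=\beta=\gamma=0$, so the Riccati foliation $\Ho$ is holomorphic and regular at $p$. This contradicts $p\in D_0\subset D$, and case (a) is excluded. The main conceptual point, rather than a technical one, is to notice that transversality of each extending foliation to $D_0$ upgrades the a priori multivalued closed primitives to genuinely holomorphic ones, which then kills any pole of the Riccati equation.
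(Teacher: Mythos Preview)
Your proof is correct and follows essentially the same route as the paper's: assume $D$ is not invariant by either foliation, apply the last part of Proposition~\ref{P:multiformclosed} to each of the two fixed points to extend $\omega_0,\omega_\infty$ holomorphically, and conclude that the pencil (hence the Riccati foliation) is regular at $p$, a contradiction. Your write-up simply makes the final step more explicit by introducing the primitives $u,v$ and plugging into~\eqref{eq:FirstIntegToRicc}. One minor quibble: case~(c) in your opening paragraph is actually \emph{excluded} by the transversality hypothesis (there $\Fo_0$ and $\Fo_\infty$ share $D_0$ as a leaf and are tangent along it), so only (a) and (b) are relevant; this does not affect the argument, since you dispose of (c) anyway.
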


\begin{proof} Assume by contradiction that $D$ is not invariant by either $\Fo_0$ or $\Fo_\infty$.
At a generic point of $D$, the divisor is transversal to $\Fo_0$ and $\Fo_\infty$.
We apply the last part of Proposition \ref{P:multiformclosed} and get that $\Fo_0$ and $\Fo_\infty$
are defined by holomorphic closed $1$-forms $\omega_0,\omega_\infty$ at $p$,
which write $\omega_0=dx$ and $\omega_\infty=dy$ in convenient local coordinates at $p$.
But clearly, the pencil is regular at $p$, contradiction.
\end{proof}

\begin{lemma}\label{L:Pencil1} 
If  $\Fo_0$ and $\Fo_\infty$ are transversal and $D$ is invariant by $\Fo_\infty$, 
then there exist local coordinates in which the structure is generated by 
\begin{enumerate}
\item $\omega_t= dx + ty^{\nu}dy$, $\nu\in \C^*$,
\item or $\omega_t= dx + t(\frac{dy}{y^{n}}+\frac{dy}{y})$, $n\in \Z_{>1}$.  
\end{enumerate} 
\end{lemma}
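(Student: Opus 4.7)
My plan is to normalize the coordinates step by step, reduce $\omega_\infty$ to a one-variable problem, and then invoke Lemma \ref{L:multivaluedform}.

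First, since $\Fo_\infty$ is transversal to $\Fo_0$ and $D$ is $\Fo_\infty$-invariant, Lemma \ref{L:Pencil2} forces $\Fo_0$ to be transversal to $D$ at the generic point $p$. Applying the last part of Proposition \ref{P:multiformclosed} to $\Fo_0$, the form $\omega_0$ extends as a closed holomorphic non-vanishing $1$-form at $p$, so in suitable local coordinates $\omega_0=dx$. Because any change of $y$ alone leaves $\omega_0=dx$ unchanged, I can also arrange $D=\{y=0\}$. Finally, since $\Fo_\infty$ extends as a smooth holomorphic foliation (first part of Proposition \ref{P:multiformclosed}) with $D$ as a leaf and transversal to $\Fo_0$, I construct a first integral $Y(x,y)$ by flowing along $\Fo_\infty$ to the transversal $\{x=0\}$: this gives $Y(0,y)=y$ and $Y(x,0)=0$, so relabeling $y:=Y$ preserves both $\omega_0=dx$ and $D=\{y=0\}$, while yielding $\Fo_\infty=\ker(dy)$.

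In this coordinate system, $\omega_\infty$ is a closed multivalued $1$-form with $\ker\omega_\infty=\ker(dy)$, hence $\omega_\infty=h(x,y)\,dy$; the closedness condition $d\omega_\infty=0$ gives $\partial_x h=0$, i.e.\ $h=h(y)$. The associated Riccati equation then reads $\omega=dz+\tfrac{h'(y)}{h(y)}z\,dy$, and the logarithmic hypothesis together with Proposition \ref{prop:RiccLogReg} forces $(\log h)'$ to have at most a simple pole at $y=0$. Integrating, this means $h(y)=y^{\nu}v(y)$ with $v$ holomorphic and non-vanishing and $\nu\in\C$ (with the convention that negative integer $\nu$ gives a meromorphic $h$).

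Now I apply Lemma \ref{L:multivaluedform} to $\omega_\infty=y^\nu v(y)\,dy$, via a change of $y$-coordinate, which again preserves $\omega_0=dx$ and $D=\{y=0\}$:
\begin{itemize}
\item If $\nu\notin\Z_{<0}$, then $\omega_\infty=y^\nu dy$. Note $\nu=0$ would force $D$ to be non-polar for the affine structure, contradicting $p\in D$; hence $\nu\in\C^*$, giving model (1).
\item If $\nu=-1$, then $\omega_\infty=\lambda\,dy/y$; absorbing $\lambda$ into the pencil parameter $t$ puts us in model (1) with $\nu=-1$.
\item If $\nu=-n$ with $n\ge 2$, then $\omega_\infty=dy/y^n+\lambda\,dy/y$. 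When $\lambda=0$ this is model (1) with $\nu=-n$; when $\lambda\ne 0$, the combined rescaling $y\mapsto a y$ together with $\omega_\infty\mapsto c\,\omega_\infty$ (equivalently a reparametrization of $t$) can be solved by $c=1/\lambda$, $a^{1-n}=\lambda$, producing the normalized form $\omega_\infty=dy/y^n+dy/y$, i.e.\ model (2).
\end{itemize}

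The main technical point I expect to have to justify carefully is Step~1: simultaneously achieving $\omega_0=dx$, $D=\{y=0\}$, and $\Fo_\infty=\ker(dy)$ using only changes of $y$, since the normalization $\omega_0=dx$ rigidifies the $x$-coordinate. The flow-box construction of $Y$ above is what makes this possible, using crucially that $D$ is itself a leaf of $\Fo_\infty$.
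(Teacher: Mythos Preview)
Your proof is correct and follows essentially the same approach as the paper: normalize coordinates so that $\omega_0=dx$, $\Fo_\infty=\ker(dy)$, and $D=\{y=0\}$; deduce $\omega_\infty=h(y)\,dy$ by closedness; use the logarithmic hypothesis on the Riccati form $dz+\frac{h'}{h}z\,dy$ to force $h'/h$ to have at most a simple pole; and then apply Lemma~\ref{L:multivaluedform}. Your normalization in Step~1 is slightly more explicit than the paper's (you construct $Y$ by flowing to the transversal $\{x=0\}$, whereas the paper first picks coordinates with $\Fo_0=\ker(dx)$, $\Fo_\infty=\ker(dy)$ and only afterwards rescales $x$ to get $\omega_0=dx$), and your final rescaling argument for $\lambda\neq 0$ is more careful: you correctly observe that one needs both a multiplicative rescaling of $\omega_\infty$ \emph{and} a homothety $y\mapsto ay$, solving $c=1/\lambda$, $a^{1-n}=\lambda$, whereas the paper simply says ``replace $\omega_\infty$ by a multiple''. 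One small remark: your appeal to Lemma~\ref{L:Pencil2} at the outset is unnecessary, since $D$ being a leaf of $\Fo_\infty$ together with $\Fo_0\pitchfork\Fo_\infty$ already gives $\Fo_0\pitchfork D$ directly.
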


\begin{proof} 
We can choose local coordinates such that $\Fo_0=\ker(dx)$ and $\Fo_\infty=\ker(dy)$.
Moreover, by Proposition \ref{P:multiformclosed}, we can write $\omega_0=f(x)dx$
with $f$ holomorphic and non vanishing; after coordinate change $x:=\varphi(x)=\int f(x)dx$,
we can assume $\omega_0=dx$ and $\omega_\infty=g(x,y)dy$ for a multivalued function $g$. 
Since $\omega_\infty$ is closed, we get $g(x,y)=g(y)$, still multivalued.
The corresponding Riccati equation in the local trivialization of $\Proj(T_S)$ is deduced as follows:
$$
\omega_t=dx+t g(y)dy \ \ \ \Rightarrow\ \ \ \frac{1}{t}=g(y)\frac{dy}{dx}=:g(y)z
$$
which, after derivation gives
$$0=zdg+gdz \ \ \ \leadsto\ \ \ \omega=dz+\frac{dg}{g}z=0.$$
In case $\omega$ has a multiple pole, then we easily deduce that we have an irregular singular point.
So, in the regular-singular case, we must have a simple pole and we again easily check that it is logarithmic.
Applying Lemma \ref{L:multivaluedform}, we deduce that, after $y$-coordinate change, 
we get one of the model of the statement. Indeed, in case $\nu\in\Z_{<0}$ in Lemma \ref{L:multivaluedform}, 
we can replace $\omega_\infty$ by a multiple to normalize the residue $\lambda=1$.
\end{proof}

\begin{lemma} \label{L:Pencil3} 
If $\Fo_0$ and $\Fo_\infty$ are tangents along $D$, and transversal to $D$,
then there exist local coordinates in which the structure is generated by  
$$\omega_t=dx + td(y^{n}),\ \ \ n\in\Z_{\ge2}.$$
In the regular singular case, we also get the following local models
$$\omega_t=f(x)dx + td(x+y^{n}),\ \ \ n\in\Z_{\ge2},\ f\in\Ol^*.$$
\end{lemma}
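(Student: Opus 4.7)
First, since both $\Fo_0$ and $\Fo_\infty$ are transversal to $D$ at $p$, I would invoke Proposition~\ref{P:multiformclosed} to conclude that $\omega_0$ and $\omega_\infty$ extend as closed, holomorphic, non-vanishing $1$-forms near $p$, hence locally exact: $\omega_i = df_i$. My plan is then to first reach the regular-singular normal form by a sequence of coordinate changes, and then to exploit the logarithmic hypothesis (via the associated Riccati equation) to further normalize.

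For the first stage, choose local coordinates $(x,y)$ with $D=\{y=0\}$ and $\Fo_0=\ker(dx)$; closedness of $\omega_0$ forces $\omega_0=g(x)\,dx$ with $g\in\Ol^*$. The tangency $\partial_y f_\infty|_{y=0}=0$ and transversality $\partial_x f_\infty|_{y=0}\neq 0$ yield $f_\infty=F(x)+y^n H(x,y)$ with $n\ge 2$, $F'\neq 0$, and $H(x,0)\neq 0$ (the last from $\omega_0\wedge\omega_\infty\not\equiv 0$ off $D$). I would then perform two coordinate changes: first $X=F(x)$, which keeps $\omega_0$ of the form $\tilde g(X)\,dX$ and turns $f_\infty$ into $X+y^n\tilde H(X,y)$; then $Y=y\,\tilde H(X,y)^{1/n}$, which preserves $D$ (since $\tilde H(X,0)\neq 0$ guarantees a local holomorphic $n$-th root) and reduces $f_\infty$ to $X+Y^n$. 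Renaming $(X,Y)$ as $(x,y)$ gives the regular-singular normal form
\[
\omega_t = f(x)\,dx + t\,d(x+y^n),\qquad f\in\Ol^*,\ n\ge 2,
\]
which is the second model in the statement.

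For the second stage, I would apply the logarithmic hypothesis to force $f$ constant. Using formula~(\ref{eq:FirstIntegToRicc}) with $f_0=f(x)$, $g_0=0$, $f_\infty=1$, $g_\infty=ny^{n-1}$, $\delta=nfy^{n-1}$, I expect $\alpha=0$, $\beta=(n-1)\frac{dy}{y}-\frac{f'}{f}\,dx$ and $\gamma=-\frac{f'}{nf\,y^{n-1}}\,dx$; a direct computation then shows that $d\omega$ contains a term $\frac{(n-1)f'}{nf\,y^n}\,dy\wedge dx$, of pole order $n\ge 2$ along $D$. By Proposition~\ref{prop:RiccLogReg}, the logarithmic hypothesis is precisely that $\omega$ and $d\omega$ have at most simple poles, so it forces $f'\equiv 0$, i.e.\ $f$ constant. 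Rescaling $\omega_0$ by $1/f$ then makes the pencil equal to $\C\langle dx, d(y^n)\rangle$, giving the logarithmic normal form $\omega_t = dx + t\,d(y^n)$. The main obstacle will be this Riccati computation itself: one has to verify that the pole order of $d\omega$ produced by $\gamma$ really is $n$ for every $n\ge 2$, with no hidden cancellation that would permit a non-constant $f$; by contrast, the preceding coordinate normalizations are routine applications of the implicit function theorem and of the existence of a local holomorphic $n$-th root.
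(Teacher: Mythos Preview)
Your proposal is correct and follows essentially the same route as the paper: reach the normal form $\omega_t=f(x)\,dx+t\,d(x+y^n)$, compute the associated Riccati $1$-form, and observe that $d\omega$ acquires a pole of order $n\ge 2$ along $D$ unless $f'\equiv 0$. The only differences are cosmetic: the paper obtains the preliminary normalization of the pair $(\Fo_0,\Fo_\infty)$ by invoking \cite[Lemme~5.3]{LorayS} (and then re-applying it after straightening $\omega_\infty|_D$), whereas you carry out the two coordinate changes $X=F(x)$ and $Y=y\,\tilde H^{1/n}$ by hand; and the paper additionally checks, via the gauge $\tilde z=zy^{n-1}$, that the second model is always regular-singular, a verification you omit but which is not needed for the classification direction of the lemma.
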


\begin{proof} 
We first apply \cite[Lemme 5.3]{LorayS} to deduce that there is a change of coordinates 
such that $\Fo_0=\{dx=0\}$, and $\Fo_\infty=\{d(x+y^{n})=0\}$, $n\geq2$, so that $D=\{y=0\}$, 
$\omega_0=f(x,y)dx$ and $\omega_{\infty}=g(x,y)d(x+y^{n})$. 
Since $\omega_0$ and $\omega_\infty$ are closed, and $\omega_0$ extends holomorphically at $p$
(Proposition \ref{P:multiformclosed}), we have $f(x,y)=f(x)$ and $g(x,y)=g(x+y^{n})$
with $f,g\in\Ol^*$. In fact, we can normalize the restriction $\omega_\infty\vert_D=g(x)dx$
to $dx$ by a change of $x$-coordinate and then reapply \cite[Lemme 5.3]{LorayS} to get 
the normal form
$$\omega_0=f(x)dx\ \ \ \text{and}\ \ \ \omega_{\infty}=d(x+y^{n}).$$
This pencil induces the Riccati equation
\[
\omega=dz-\frac{1}{ny^{n-1}}\frac{df}{f}+\left((n-1)\frac{dy}{y}-\frac{df}{f}\right)z.
\]
One can check that, after meromorphic gauge transformation $\tilde z=zy^{n-1}$, the Riccati foliation becomes holomorphic:
the above local model is always regular-singular. However, after derivation, we get that $d\omega$ has a pole of order 
$n\ge2$ unless $f'(x)=0$, so $f$ is a constant: we obtain the pencil of the statement (with a different basis).
\end{proof}

\begin{lemma} \label{L:Pencil4} 
If $D$ is invariant by $\Fo_0$ and $\Fo_\infty$, then $\Fo_0$ and $\Fo_\infty$
are generic elements of the pencils described in Lemma \ref{L:Pencil1}
(with $\nu\in\Z_{<0}$ in the first case).
\end{lemma}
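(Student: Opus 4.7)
My plan is to reduce Lemma \ref{L:Pencil4} to Lemma \ref{L:Pencil1} by exhibiting a foliation $\Fo'$ in the pencil transversal to $D$, re-basing the pencil as $(\Fo', \Fo_\infty)$, and applying Lemma \ref{L:Pencil1}. First, I argue that the local projective monodromy around $D_0$ must be trivial. Assume otherwise; Propositions \ref{prop:RiccLogReg} and \ref{P:gaugetransformation} then furnish a biholomorphic bundle transformation $\tau$ putting the Riccati into the form $dz = \lambda z \frac{dy}{y}$ with $\lambda \notin \Z$. The fixed fibers $\{z = 0\}$ and $\{z = \infty\}$ of this normal form are pointwise distinct; since $\tau$ is fiberwise biholomorphic, their $\tau^{-1}$-preimages $\sigma_0, \sigma_\infty$ in the original trivialization are pointwise distinct sections of $\Proj(T_S)$. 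Via the intrinsic contact form $dy - z \, dx$, these yield the fixed foliations $\Fo_i = \ker(dy - s_i \, dx)$ where $s_i$ is the $z$-component of $\sigma_i$; both $\Fo_i$ having $\{y = 0\}$ as a leaf forces $s_0|_{y=0} = 0 = s_\infty|_{y=0}$, contradicting pointwise distinctness along $D$.

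Hence the projective monodromy is trivial, and the normalized Riccati is $dz = \lambda z \frac{dy}{y}$ with $\lambda \in \Z$; the existence of two distinct fixed foliations tangent to $D$ forces $\lambda \geq 1$. The invariant sections $\{z = Cy^\lambda\}_{C \in \Proj^1}$ all collapse to $z = 0$ along $D$ for finite $C$, while $C = \infty$ alone yields $z = \infty$ there. Pulling back via $\tau^{-1}$, the $C = \infty$ section gives an invariant foliation $\Fo'$ in the original pencil whose direction at the generic $p \in D$ is $\tau_p^{-1}(\infty)$; since $\Fo_0, \Fo_\infty$ being tangent to $D$ forces $\tau_p^{-1}(0) = 0$ and $\tau_p$ is a Moebius bijection, we get $\tau_p^{-1}(\infty) \neq 0$, making $\Fo'$ transversal to $D$ and smooth at $p$. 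Choosing $\omega' \in V$ to define $\Fo'$ and using $(\omega', \omega_\infty)$ as a basis of the pencil, we satisfy the hypotheses of Lemma \ref{L:Pencil1} (one transversal to $D$, one with $D$ as a leaf, mutually transversal at $p$), so the pencil is (1) $\omega_t = dx + ty^\nu \, dy$ with $\nu \in \C^*$, or (2) $\omega_t = dx + t\bigl(\frac{dy}{y^n} + \frac{dy}{y}\bigr)$.

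To finish, I verify that case (1) forces $\nu \in \Z_{<0}$. For $\nu \in \C^* \setminus \Z_{<0}$, the form $\omega_t = dx + ty^\nu \, dy$ with finite $t$ restricts at $y = 0$ to $dx$, whose kernel $\partial_y$ is transversal to $D$; only $\Fo_\infty$ (at $t = \infty$) is tangent. But Lemma \ref{L:Pencil4}'s hypothesis supplies two distinct tangent foliations, so $\nu \in \Z_{<0}$: clearing the pole gives $y^{|\nu|} \omega_t = y^{|\nu|} dx + t \, dy$, restricting to $t \, dy$ at $y = 0$ with kernel $\partial_x$ tangent to $D$ for every $t \neq 0$. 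Case (2) already supplies infinitely many tangent foliations by the same pole-clearing mechanism. The main obstacle is the step producing $\Fo'$, where one must carefully track the biholomorphic bundle transformation on the intrinsic contact structure to ensure the pulled-back $C = \infty$ section corresponds to a well-defined foliation in the pencil, transversal to $D$ at the generic point.
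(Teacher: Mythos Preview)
Your proof is correct and takes a genuinely different route from the paper's.

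The paper proceeds by direct computation: it puts the tangent pair $(\Fo_0,\Fo_\infty)$ into the normal form of Proposition~\ref{Prop:Tang}, writes the pencil with closed $1$-forms $g_0(y)dy$ and $f_0(y+xy^{n+1})d(y+xy^{n+1})$, derives the Riccati equation~(\ref{Eq:RiccTang}) explicitly, and then imposes the logarithmic condition coefficient by coefficient. This forces $\nu=-(n+1)$ (or the meromorphic analogue), after which the specific element $\omega_{-1}$ is checked to be transversal to $D$ and the pair $(\omega_0,\omega_{-1})$ is fed into Lemma~\ref{L:Pencil1}.

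You instead extract the transversal element abstractly. Your key observation is that two distinct monodromy-fixed foliations both tangent to $D$ correspond to two leaves of the Riccati foliation meeting over $D$; after the biholomorphic normalization of Proposition~\ref{P:gaugetransformation}, the only fixed leaves for $\lambda\notin\Z$ are $\hat z=0$ and $\hat z=\infty$, which never meet. This forces the projective monodromy to be trivial, hence $\lambda\in\Z\setminus\{0\}$, and then among the full family of leaves $\hat z=Cy^\lambda$ exactly one limits to a direction transverse to $D$, giving your $\Fo'$. This bypasses all of the explicit Riccati computations and the use of Proposition~\ref{Prop:Tang}.

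Two minor points worth tightening: first, the assertion ``forces $\lambda\ge1$'' should read $\lambda\in\Z\setminus\{0\}$, with the case $\lambda\le-1$ handled symmetrically (swap the roles of $\hat z=0$ and $\hat z=\infty$, or apply $\hat z\mapsto1/\hat z$). Second, in your final step, since you have already shown the projective monodromy is trivial and the table in Corollary~\ref{C:calmonlog} gives monodromy $e^{2\pi i\nu}$ for model~(1), you know $\nu\in\Z$ a priori; so the dichotomy is really $\nu\in\Z_{>0}$ versus $\nu\in\Z_{<0}$, and your restriction argument is cleanest stated just for positive integers. Neither point affects the validity of the argument.
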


We start recalling a local technical Lemma for foliations tangent along a common leaf (\cite[Lemma 5]{LorayV}, \cite[Proposition 1]{Thom}, \cite[Lemma 4.7, 4.9]{LTT}):

\begin{prop}\label{Prop:Tang}
Let $\Fo$ and $\Go$ be two smooth foliations at $(\C^2,0)$
that have $y=0$ as a common leaf, and transversal outside. Then, up to change of coordinates,
we can assume $\Fo$ and $\Go$ are defined by the respective first integrals
$$f(x,y)=y\ \ \ \text{and}\ \ \  g(x,y)=y+xy^{k+1},\ \ \ k\in\Z_{\ge0},$$
where $k+1$ is the order of tangency, defined by the vanishing order of $df\wedge dg$ along $y=0$.

Moreover, this normalisation is not unique: if a diffeomorphism $\Phi(x,y)$ of $(\C^2,0)$
commutes with the normal form, then we obtain new first integrals that factor through the initial ones as follows:
$$f\circ\Phi=\varphi\circ f\ \ \ \text{and}\ \ \ g\circ\Phi=\psi\circ g$$
for one-dimensional diffeomorphisms $\varphi$ and $\psi$. Then this provides a one-to-one
correspondance between 
\begin{itemize}
\item symetries $\Phi$ of the normal form,
\item pairs $(\varphi,\psi)$ of diffeomorphisms coinciding up to order $k+1$, 
i.e. satisfying $\varphi(y)-\psi(y)=o(y^{k+1})$.
\end{itemize}
\end{prop}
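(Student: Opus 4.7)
The plan is to first straighten $\Fo$, then progressively normalize a first integral of $\Go$, and finally analyze the stabilizer of the resulting normal form. For existence, I would start with coordinates in which $\Fo=\ker(dy)$, so that $f=y$. Since $\{y=0\}$ is a leaf of $\Go$, any first integral can be written $g=y\cdot u(x,y)$, and regularity of $\Go$ at the origin forces $u(0,0)\neq 0$. The tangency hypothesis $df\wedge dg=y\,\partial_x u\,dy\wedge dx\in(y^{k+1})$ translates to $\partial_x u(x,y)\in(y^k)$; integrating in $x$ gives
\[
u(x,y)=u_0(y)+y^k W(x,y),\qquad W(0,y)\equiv 0,\qquad \partial_x W(0,0)\neq 0,
\]
so that $g=y\,u_0(y)+y^{k+1}W(x,y)$.

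Next I would replace $y$ by $\tilde y:=y\,u_0(y)$, which is a valid change since $u_0(0)=u(0,0)\neq 0$, and which preserves $\Fo$ while normalizing $g$ to $\tilde y+\tilde y^{k+1}\widetilde W(x,\tilde y)$ with $\widetilde W(0,0)=0$ and $\partial_x\widetilde W(0,0)\neq 0$. The implicit function theorem then produces a parametrized diffeomorphism $\Psi(X,\tilde y)$ solving $\widetilde W(\Psi(X,\tilde y),\tilde y)=X$, and the substitution $x=\Psi(X,\tilde y)$ preserves $\Fo$ while delivering the final normal form $g=\tilde y+X\tilde y^{k+1}$.

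For the uniqueness/symmetry statement, any diffeomorphism $\Phi$ fixing the origin and preserving $\Fo$ takes the form $\Phi(x,y)=(A(x,y),B(y))$ with $B(0)=A(0,0)=0$ and $\partial_x A(0,0)\neq 0$. Preservation of $\Go$ means $g\circ\Phi=\psi\circ g$ for some $\psi$, and from $f\circ\Phi=\varphi\circ f$ one reads $\varphi=B$. Evaluating at $x=0$ in the resulting identity
\[
B(y)+A(x,y)\,B(y)^{k+1}=\psi(y+xy^{k+1})
\]
gives $\psi(y)-\varphi(y)=A(0,y)\,\varphi(y)^{k+1}$; since $A(0,0)=0$ and $\varphi(y)^{k+1}$ vanishes to order exactly $k+1$, the right-hand side is $O(y^{k+2})$, which is precisely $o(y^{k+1})$. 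Conversely, given such $(\varphi,\psi)$ I would set $B:=\varphi$ and define
\[
A(x,y):=\frac{\psi(y+xy^{k+1})-\varphi(y)}{\varphi(y)^{k+1}},
\]
then check holomorphy at the origin by a Taylor-remainder argument and verify that $\Phi=(A,B)$ is an admissible symmetry.

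The main technical obstacle I anticipate is this last holomorphy verification: writing $\psi(y+xy^{k+1})-\psi(y)=xy^{k+1}\Lambda(x,y)$ with $\Lambda(0,0)=\psi'(0)$ (via $\Lambda=\int_0^1\psi'(y+txy^{k+1})\,dt$), and combining with $\psi(y)-\varphi(y)=y^{k+2}\mu(y)$ and $\varphi(y)^{k+1}=y^{k+1}\eta(y)$ with $\eta(0)\neq 0$, one obtains $A(x,y)=[x\Lambda(x,y)+y\mu(y)]/\eta(y)$, which is holomorphic with $A(0,0)=0$ and $\partial_x A(0,0)=\psi'(0)/\eta(0)\neq 0$. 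The remaining bookkeeping — bijectivity of the correspondence and compatibility of tangency order with the normal form — is then routine.
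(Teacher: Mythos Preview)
Your argument is correct and, for the symmetry half, follows essentially the same route as the paper: both derive the functional equation
\[
\psi(y+xy^{k+1})=\varphi(y)+A(x,y)\,\varphi(y)^{k+1},
\]
read off the compatibility $\varphi\equiv\psi \pmod{y^{k+2}}$ at $x=0$, and then solve for $A$ to obtain the inverse correspondence. Your holomorphy check via the integral remainder $\Lambda(x,y)=\int_0^1\psi'(y+txy^{k+1})\,dt$ is in fact more careful than the paper's sketch (which writes the denominator as $y^{k+1}$ rather than $\varphi(y)^{k+1}$; the two differ by a unit, so the conclusion is unaffected).

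The only genuine difference is in the first half. The paper does not prove the existence of the normal form at all: it simply invokes the references \cite{LorayV}, \cite{Thom}, \cite{LTT}. Your direct argument---straightening $\Fo$, writing $g=y\,u_0(y)+y^{k+1}W(x,y)$, absorbing $u_0$ into a new $y$, then inverting $x\mapsto\widetilde W(x,\tilde y)$---is self-contained and elementary. One point worth making explicit in your write-up: the invertibility step needs $\partial_x\widetilde W(0,0)\ne 0$, and this is guaranteed not merely by the tangency order being $k+1$ along $\{y=0\}$ but by the hypothesis that $\Fo$ and $\Go$ are transversal \emph{everywhere} off $\{y=0\}$; otherwise the factor $V(x,y)$ in $\partial_x u=y^kV$ could vanish at the origin while still being generically nonzero on $\{y=0\}$.
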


\begin{proof}
The first part of the statement is proved in each of the quoted references.
The second part is not stated like this, and we give the proof. 
We just have to explain
how to reconstruct $\Phi$ from the pair $(\varphi,\psi)$. Clearly, by action on $f$, we see
that $\Phi=(\phi(x,y),\varphi(y))$. Then, action on $g$ gives
$$\psi(y+xy^{k+1})=\varphi(y)+\phi(x,y)(\varphi(y))^{k+1}$$
which already gives the condition $\psi(y)=\varphi(y)$ mod $y^{k+2}$. But this rewrites
$$\phi(x,y)=\frac{\psi(y+xy^{k+1})-\varphi(y)}{y^{k+1}}=\frac{\psi(y)-\varphi(y)}{y^{k+1}}+\psi'(0)x+o(x)$$
which shows that $\phi(x,y)$ is uniquely determined, holomorphic, of the form $\phi(x,y)=\psi'(0)x+cy+\text{h.o.t.}$.
\end{proof}

\begin{proof}[Proof of Lemma \ref{L:Pencil4}]
Following Proposition \ref{Prop:Tang},
there is a change of coordinates such that  
$$\Fo_0=\ker(dy),\ \ \ \text{and}\ \ \ \Fo_\infty=\ker(d(y+xy^{n+1})),\ \ \ n\in\Z_{\geq 0},$$ 
so that $D=\{y=0\}$. The pencil writes
$$\omega_0=g(x,y)dy\ \ \ \text{and}\ \ \ \omega_{\infty}=f(x,y)d(y+xy^{n+1})$$
and by closedness of the $1$-forms, we can furthermore assume
$$f(x,y)=f_0(y+xy^{n+1}),\ \ \ \text{and}\ \ \ g(x,y)=g_0(y)$$
for possibly multivalued holomorphic functions $f_0,g_0$ on the punctured neighborhood of $0\in\mathbb C$.
Hence, this pencil induces a Riccati foliation given by
\begin{equation}\label{Eq:RiccTang}
\omega=dz-\left(\frac{df}{f}-\frac{dg}{g}+(n+1)\frac{dy}{y}\right)z
\end{equation}
$$-\left(\frac{(1+(n+1)xy^{n})}{y^{n+1}}\left(\frac{df}{f}-\frac{dg}{g}\right)
+n(n+1)x\frac{dy}{y^2}+(n+1)\frac{dx}{y}\right)z^2.$$
By considering the term in $z$, we see that $\frac{df}{f}-\frac{dg}{g}$ must be a meromorphic $1$-form.
After gauge transformation $\tilde z=\frac{z}{y^{n+1}}$, we get 
$$dz-\left(\frac{df}{f}-\frac{dg}{g}\right)z$$
$$-\left((1+(n+1)xy^{n})\left(\frac{df}{f}-\frac{dg}{g}\right)+n(n+1)xy^{n-1}dy+(n+1)y^n dx\right)z^2.$$
This latter equation has an irregular singular point whenever
$\frac{df}{f}-\frac{dg}{g}$ has a multiple pole; we deduce that  $\frac{df}{f}-\frac{dg}{g}$
must have at most simple pole, and in that case, the Riccati foliation is regular-singular. 

Assume now that the structure is logarithmic. Considering the coefficient of $z^2$ in (\ref{Eq:RiccTang}),
and its restriction to $x=0$, we see that 
\begin{itemize}
\item $\frac{df}{f}$ must be logarithmic (coefficient of $dx$)
\item $\frac{df}{f}-\frac{dg}{g}$ must be holomorphic, vanishing at order $n$ along $y=0$  (coefficient of $dy$)
\end{itemize}
Applying Lemma \ref{L:multivaluedform} to $f_0(y)dy$, we have two cases:
$$f_0(y)dy=\varphi^* y^\nu dy,\ \ \ \nu\in\C^*,\ \ \ \text{or}\ \ \ \varphi^* \frac{dy}{y^{k+1}}+\frac{dy}{y}$$
for some local diffeomorphism $\varphi(y)$.

First case: $f_0(y)dy=\varphi^* y^\nu dy$. Once more we apply Proposition \ref{Prop:Tang}
and we get that there exists a change of coordinates on $(\C^2,0)$ such 
that $f(x,y)=(y+xy^{n+1})^{\nu}$. Therefore, we have $\frac{dg}{g}=\nu\frac{dy}{y}+h(y)dy$ with $h(y)$
holomorphic, vanishing at order $n$ at $y=0$. Substituting in (\ref{Eq:RiccTang}),
and assuming first $n>0$, we see that the non logarithmic terms are:
$$(\nu+n+1)\left(\frac{dx}{y}+nx\frac{dy}{y^2}\right)$$
and we deduce that $\nu=-n-1$. In the case $n=0$, the non logarithmic term is:
$$(\nu+1)\frac{dx}{y}$$
and we find $\nu=-1$ as before.

Second case: a similar study shows that, when $f_0(y)dy=\varphi^*\frac{dy}{y^{k+1}}+\frac{dy}{y}$,
again, $k=n$, and we can finally assume $f_0(y)=\frac{1}{y^{n+1}}+\frac{1}{y}$ or $\frac{1}{y^{n+1}}$. 
Taking into account the vanishing order of $\frac{df}{f}-\frac{dg}{g}$, we deduce that $g_0(y)-f_0(y)$
is holomorphic, and $g_0(y)$ is conjugated to $f_0$ by a diffeomorphism tangent to the identity
up to order $n+1$. Therefore, Proposition \ref{Prop:Tang}, we can assume moreover that $g_0(y)=f_0(y)$.
We then arrive to the normal form 
\[
 \omega_t=\frac{dy}{y^{n+1}}+\epsilon\frac{dy}{y}+ t(\frac{1}{(y+xy^{n+1})^{n+1}}+
\frac{\epsilon}{y+xy^{n+1}})d(y+xy^{n+1}), 
\]
with $\epsilon=0$ or $1$. Setting $t=-1$, we get that 
$$\omega_{-1}=a(x,y)dx+b(x,y)dy\ \ \ \text{with}\ \ \ a(0)=-1,\ \text{and}\ b(0)=0.$$

Finally, replacing $\omega_0$ and $\omega_\infty$ by $\omega_0$ and $\omega_{-1}$,
we get back to the case $\Fo_0$ and $\Fo_\infty$ transversal in Lemma \ref{L:Pencil1}.
\end{proof} 
Here the proof of the theorem ends. 
\end{proof}

Now, we are going to solve the local classification problem in the remaining cases.

\begin{thm} \label{T:affinelogarithmic}
Consider a logarithmic affine structure on $S$ with polar divisor $D$
and let $D_0$ be a branch of $D$ with parabolic local monodromy.
Then, at a generic point of $D_0$, the affine structure is described 
by one of the model below:
\begin{enumerate}
\item  $\omega_t= dx -y^n\ln ydy+ ty^ndy$,  $n\in \Z_{\geq 0}$, 
\item $\omega_t= -\ln ydx+\frac{dy}{y^n}+(c-x)\frac{dy}{y}+tdx$, $n\in \Z_{>0}$, 
\end{enumerate}   
where $c=0$ or $1$. 
\end{thm}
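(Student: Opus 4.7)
The plan is to adapt the strategy of Theorem~\ref{T:affinestructure} to the parabolic situation. Since the projective local monodromy around $D_0$ has exactly one fixed point in $\PP(V)$, I choose a basis $(\omega_0,\omega_\infty)$ of $V=\C\langle\omega_0,\omega_\infty\rangle$ in which the linear monodromy takes the normalized form
\[
\omega_\infty\longmapsto\omega_\infty,\qquad \omega_0\longmapsto\omega_0-2\pi i\,\omega_\infty.
\]
Proposition~\ref{P:multiformclosed} applied to this unique fixed direction shows that $\Fo_\infty=\ker(\omega_\infty)$ extends across $D_0$ as a (possibly singular) holomorphic foliation at any generic point $p\in D_0$, while no other element of the pencil does. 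At such a $p$, $\Fo_\infty$ is smooth, and with respect to $D_0$ it is either transverse or has $D_0$ as a leaf; these two configurations will produce the two models of the statement.

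\smallskip
\emph{$D_0$ is a leaf of $\Fo_\infty$.} I would choose coordinates so that $\Fo_\infty=\ker(dy)$ and $D_0=\{y=0\}$. The closed form $\omega_\infty$ depends on $y$ alone and is holomorphic, so $\omega_\infty=g(y)\,dy$ with vanishing order $n\ge 0$ at $y=0$; Lemma~\ref{L:multivaluedform} applied with $\nu=n$ then normalizes $\omega_\infty=y^n\,dy$ by a $y$-coordinate change. Because $y^n\log(y)\,dy$ is closed and carries the monodromy shift $2\pi i\,\omega_\infty$, the $1$-form
\[
\eta:=\omega_0+y^n\log(y)\,dy
\]
is single-valued, closed and meromorphic near $D_0$. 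Computing the Riccati equation associated to the pencil $\omega_t=\eta+(t-\log y)\,y^n\,dy$, the logarithmic condition of Proposition~\ref{prop:RiccLogReg} forces $\eta$ to be holomorphic and non-vanishing at the generic point; being closed, $\eta=df$ for a local holomorphic $f$, and taking $x:=f$ as new coordinate yields $\omega_0=dx-y^n\log(y)\,dy$, which is model~(1).

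\smallskip
\emph{$\Fo_\infty$ is transverse to $D_0$.} The last assertion of Proposition~\ref{P:multiformclosed} gives $\omega_\infty$ as a holomorphic closed non-vanishing $1$-form; I pick coordinates in which $\omega_\infty=dx$ and $D_0=\{y=0\}$. The $1$-form
\[
\eta:=\omega_0+\log(y)\,dx
\]
is single-valued and meromorphic but, in contrast with the previous case, not closed: $d\omega_0=0$ and $d(\log(y)\,dx)=\tfrac{dy}{y}\wedge dx$ give $d\eta=\tfrac{dy\wedge dx}{y}$. Writing $\eta=A\,dx+B\,dy$, this amounts to $\partial_xB-\partial_yA=-\tfrac{1}{y}$, so $B$ must have a pole along $\{y=0\}$. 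Computing the Riccati equation of the pencil $\omega_t=\eta+(t-\log y)\,dx$, the logarithmic condition forces $A$ to be holomorphic and $B$ to have pole of order exactly some $n\ge 2$ along $D_0$. The residual symmetries preserving the direction of $\omega_\infty=dx$---namely compositions of upper-triangular basis changes $(\omega_0,\omega_\infty)\mapsto(a\omega_0+b\omega_\infty,\,a\omega_\infty)$ and coordinate changes $(x,y)\mapsto(a^{-1}x,\psi(x,y))$ fixing $\{y=0\}$---are then used to reduce $\eta$ to $\tfrac{dy}{y^n}+(c-x)\tfrac{dy}{y}$, where the remaining invariant $c$ takes the values $0$ or $1$.

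\smallskip
The hard part is the transverse case. The non-closedness of $\eta$ forces a non-trivial $x$-dependent contribution in $B$, and the logarithmic constraint on the Riccati equation couples the pole orders of $A$ and $B$ in a subtle way. Moreover, keeping track of the residual normalizations to recognize the discrete invariant $c\in\{0,1\}$---rather than a continuous modulus---is the delicate step, analogous in spirit to the use of Proposition~\ref{Prop:Tang} in the semi-simple case of Theorem~\ref{T:affinestructure}.
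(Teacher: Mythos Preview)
Your case split (whether $D_0$ is a leaf of $\Fo_\infty$ or transverse to it) matches the paper's Lemmas~\ref{L:Pencilinvariant} and~\ref{L:Pencilnotinvariant}, but the tactic is different: instead of invoking the Riccati normal form of Proposition~\ref{P:gaugetransformation} and solving the resulting comparison equations, you subtract an explicit $\log y$ multiple of $\omega_\infty$ from $\omega_0$ to manufacture a single-valued form $\eta$, and then try to normalize $\eta$ directly.

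There is a genuine gap at the outset. You assert a basis in which the \emph{linear} monodromy is unipotent, $\omega_\infty\mapsto\omega_\infty$; but parabolic \emph{projective} monodromy only says the linear monodromy has a single repeated eigenvalue $c$, and nothing you have written rules out $c\neq 1$. In the transverse case this is harmless---Proposition~\ref{P:multiformclosed} already makes $\omega_\infty$ holomorphic, hence univalued, forcing $c=1$. But in the tangent case you simply declare ``$\omega_\infty$ \dots is holomorphic, so $\omega_\infty=g(y)\,dy$ with vanishing order $n\ge 0$''; this is exactly what the paper \emph{derives} from Proposition~\ref{P:gaugetransformation} together with the closedness relation~(\ref{E:eq3}), and without that input your $\eta=\omega_0+y^n\log(y)\,dy$ is not even defined. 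A secondary error: in the transverse case you claim the logarithmic condition forces the pole order of $B$ to be some $n\ge 2$, but the statement allows $n=1$ (where $\eta=(1+c-x)\tfrac{dy}{y}$), and the paper handles this case separately in Lemma~\ref{L:Pencilnotinvariant}. Finally, the phrases ``computing the Riccati equation, the logarithmic condition forces\dots'' and the reduction by ``residual symmetries'' are precisely where the content lies; the paper carries these out in full, including an implicit-function argument to eliminate the last holomorphic term, and your sketch gives no indication of how these steps would go.
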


\begin{proof}
The proof is quite similar to Theorem \ref{T:affinestructure}. 
Using the same argument we conclude that the structure
is defined by multivalued closed 1-forms $\omega_t=\omega_0+t\omega_{\infty}$.
 
Since the monodromy of the Riccati foliation $\Ho$ has an only fixed point, 
this means that the corresponding pencil of foliations $\Fo_t=\ker(\omega_t)$
has an only one element that extends through the polar divisor, 
that we can assume to be $\Fo_{\infty}$. At a generic point of $D_0$,
$\Fo_{\infty}$ is smooth, either tangent to $D_0$ (i.e. $D_0$ is a leaf),
or transversal to $D_0$. We discuss these two cases separately.

\begin{lemma}\label{L:Pencilinvariant} 
If  $D_0$ is invariant by $\Fo_\infty$, 
then there exist local coordinates such that the pencil is generated by 
\[
\omega_t= dx -y^n\ln ydy+ ty^ndy,\ \ \ \text{where}\ \ \ n\in \Z_{\ge0}.
\]
\end{lemma}

\begin{proof}
We choose coordinates such that $D_0=\{y=0\}$ and $\Fo_\infty=\ker(dy)$.
Therefore, we can write $\omega_{\infty}=f(y)dy$ and $\omega_0=gdx+hdy$
for multivalued functions $f,g,h$ (by closedness, $f$ only depends on $y$).
So, this pencil induces a Riccati foliation $\Ho$ given by
 \begin{equation*}
\omega=dz+(\frac{df}{f}-\frac{dg}{g})z+(\frac{h}{g}\frac{df}{f}-\frac{dh}{g})z^2.
\end{equation*}
Taking $\tilde z=\frac{1}{z}$ we have 
\begin{equation}\label{E:eq1}
\tilde\omega=d\tilde z+(\frac{dg}{g}-\frac{df}{f})\tilde z+(\frac{dh}{g}-\frac{h}{g}\frac{df}{f}).
\end{equation}
Since $\Ho$ is assumed to have a logarithmic pole, with parabolic monodromy, 
by Proposition \ref{P:gaugetransformation}, there exists an isomorphim $z\mapsto\varphi(x,y,z)$ 
of $\Proj^1$-bundle such that $\varphi_*\Ho$ is induced by 
$\hat\omega=d\hat z-n\hat z\frac{dy}{y} -y^{n-1}dy$, $n\in \Z_{\geq 0}$. 
The Riccati foliation has a single invariant section which is given by $\{\tilde z=\infty\}$
and $\{\hat z=\infty\}$ respectively, and we deduce that the bundle isomorphism takes the form
$$\hat z=a\tilde z+b,\ \ \ \text{where}\ \ \ a\in\Ol^*_0\ \ \ \text{and}\ \ \ b\in\Ol_0.$$
So $\Ho$ is induced by 
\begin{equation}\label{E:eq2}
\tilde \omega=d\tilde z +(\frac{da}{a}-n\frac{dy}{y})\tilde z +(\frac{db}{a}-\frac{nbdy}{ay}-\frac{y^{n-1}dy}{a}), \quad
n\in\Z_{\ge0}. 
\end{equation} 
Comparing equations (\ref{E:eq1}) and (\ref{E:eq2}) and then solving we have 
$g=cfay^{-n}$ and $h=cf(y^{-n}b-\ln y)+e$, where $c\in\C^*$ and $e\in \C$. We can suppose $c=1$
and $e=0$ by rescaling the $t$ parameter of the pencil, so 
$\omega_t=fay^{-n}dx+f(y^{-n}b-\ln y)dy+tfdy$. 
Closedness condition for $\omega_0$ gives:
\begin{equation}\label{E:eq3}
\frac{f'(y)}{f(y)}=\frac{n}{y}+\underbrace{\frac{b_x-a_y}{a}}_{\text{holomorphic}}
\end{equation}
Hence, applying Lemma \ref{L:multivaluedform}  with $\nu=n\ge0$,
we get a change of $y$-coordinate which normalizes $\omega_{\infty}=y^ndy$.
Replacing $f(y)=y^n$  in (\ref{E:eq3}), we get $a_y=b_x$ and therefore
$$\omega_t=\underbrace{adx+bdy}_{\text{closed}}-y^n\ln ydy+ty^ndy.$$
Since $a\in\Ol^*_0$, the $1$-form $adx+bdy$ can be normalized to 
$dx$ by a change of $x$-coordinate, which does not affect the other terms of $\omega_t$
and we get the form of the statement. 
\end{proof}

\begin{lemma}\label{L:Pencilnotinvariant} 
If  $D$ is transversal to $\Fo_\infty$, 
then there exist local coordinates such that the pencil is generated by 
\[
\omega_t=(1-x)\frac{dy}{y}-\ln ydx+tdx
\]
or 
\[
\omega_t= \frac{dy}{y^n}+(c-x)\frac{dy}{y}-\ln ydx+ tdx,\ \ \ n\in\Z_{>1}
\]
where $c=0$ or $1$.  
\end{lemma}

\begin{proof}
We choose coordinates such that $D_0=\{y=0\}$ and $\Fo_\infty=\ker(dx)$.
Since $\Fo_\infty$ is transversal to $D_0$, then $\omega_\infty$ extends through $D_0$,
and since it is closed, it can be normalized to $\omega_\infty=dx$.
We note that $\omega_\infty$ cannot vanish otherwise there is an extra polar component 
for the Riccati equation. We can write $\omega_0=gdx+hdy$
for multivalued functions $g,h$. So, the induced Riccati foliation $\Ho$ is given by
\begin{equation}\label{E:eq4}
\omega=dz+\frac{dh}{h}z+\frac{dg}{h}.
\end{equation}
Likely as in the previous proof, the fact that the monodromy is parabolic, $z=\infty$ is invariant
and the Riccati equation is logarithmic imply that we can write
\begin{equation}\label{E:eq5}
\omega=dz+(\frac{da}{a}-n\frac{dy}{y})z +(\frac{db}{a}-\frac{nbdy}{ay}-\frac{y^{n-1}dy}{a}), \quad
n\in\Z_{\ge0} 
\end{equation} 
where $a\in\Ol^*_0$  and $b\in\Ol_0$.
Comparing equations (\ref{E:eq4}) and (\ref{E:eq5}) and then solving we have 
$h=cay^{-n}$ and $g=cby^{-n}-c\ln y+e$, where $c\in\C$ and $e\in \C$. In fact,
one can check that $c\not=0$, otherwise the Riccati equation has a pole along $x=0$.
We can therefore suppose $c=1$ and $e=0$ by rescaling the $t$ parameter of the pencil, so that
$$\omega_t=a\frac{dy}{y^n}+\left(\frac{b}{y^{n}}-\ln y\right)dx+tdx.$$ 
Closedness condition for $\omega_0$ gives
\begin{equation}\label{E:eq6}
nb+y^n=y(b_y-a_x), \quad n\in \Z_{\geq 0}. 
\end{equation}
One easily check that $n=0$ yields a contradiction.
Now we are going to divide into two cases for the values of $n$. 

First case $n=1$. By Lemma \ref{L:multivaluedform} with parameter, there is a change of $y$-coordinate
that normalizes $a(x,y)\frac{dy}{y}$ to $a(x,0)\frac{dy}{y}$, i.e. we can now assume that $a=a(x)$.
Replacing it in (\ref{E:eq6}) and writing $b=y\tilde b$ we have $y{\tilde b}_y=a'(x)+1$. 
We deduce that $a'(x)+1={\tilde b}_y=0$, and therefore $\tilde b=\tilde b(x)$ and $a(x)=c-x$, $c\in\C$. 
However, $c=0$ is impossible, since this would create a pole along $x=0$ for the Riccati equation.
Applying the diffeomorphism $x\to cx$ and dividing $\omega_t$ by $c$,  
we can assume $c=1$ and hence 
$$\omega_t=(1-x)\frac{dy}{y}+(\tilde b(x)-\ln y)dx+tdx.$$
Finally taking the change of coordinates $(x,y)\to(x,y\exp(\frac{h}{x-1}))$, where 
$h'(x)=\tilde b(x)$, we can set $\tilde b=0$, and we get the first normal form of the statement.  

Second case $n>1$. By Lemma \ref{L:multivaluedform} with parameter, there is a change of $y$-coordinate
that normalizes  $a(x,y)\frac{dy}{y^n}=\frac{dy}{y^n}+\epsilon(x)\frac{dy}{y}$, $\epsilon\in\Ol_0$. 
Closedness condition (\ref{E:eq6}) shows that $b=y^n\tilde b$, and replacing it in (\ref{E:eq6}), 
we have $1+\epsilon'(x)=y\tilde b_y$. Hence $1+\epsilon'(x)={\tilde b}_y=0$, and
we get by integration $\tilde b=\tilde b(x)$ and $\epsilon=c-x$, for  some $c\in\C$. 
In the case $c\not=0$,  taking the diffeomorphism $x\to cx$ and rescaling $\omega_t$,
we can assume that $c=1$.  Therefore, we have 
$$\omega_t=\frac{dy}{y^n}+(c-x)\frac{dy}{y}+(\tilde b(x)-\ln y)dx+tdx,$$ 
where $c=0$ or $1$. Finally, we show that we can set $\tilde b(x)=0$ by a last change of $y$-coordinate.
For this, we consider the change of coordinate
$\psi(x,y)=(x,\varphi(x,y))$, where $\varphi=yu$, $u\in\Ol_0^*$. Here we choose $\varphi$ with the property that 
\[
\frac{\partial\varphi}{\partial y}\frac{dy}{\varphi^n}+(c-x)\frac{\partial\varphi}{\partial y}
\frac{dy}{\varphi}=\frac{dy}{y^n}+(c-x)\frac{dy}{y},
\]
this is equivalent to 
\[
\frac{1}{(1-n)y^{n-1}u^{n-1}}+(c-x)\ln u=\frac{1}{(1-n)y^{n-1}}-h(x).
\]  
For $h'(x)=\tilde b(x)$ the implicit function theorem guarantees the existence of a nonzero 
holomorphic solution $u(x,y)$. 
We can check that
\[
\psi^*\left(\frac{dy}{y^n}+(c-x)\frac{dy}{y}\right)=\frac{dy}{y^n}+(c-x)\frac{dy}{y}+\ln udx-h'(x)dx,
\]
Thus we have the desired conjugation.  
\end{proof}
So, the proof of the theorem ends. 
\end{proof}

Finally through Theorems \ref{T:affinestructure} and \ref{T:affinelogarithmic} 
we get normal forms for Ri\-cca\-ti foliations with logarithmic pole.

\begin{cor}\label{C:calmonlog}
Let $\Ho$ be a torsion-free Riccati foliation with logarithmic pole
over $\Proj^1\times (\C^2,0) \rightarrow (\C^2, 0)$, of polar divisor $D$. 
If the origin is a generic point of $D$, then after a change of coordinates on $(\C^2,0)$ we 
have the foliation $\Ho$ is given by one of the following 
equations with their respective monodromies listed bellow:  
\[\tiny
\begin{tabular}{|l|l|l|}
\cline{1-3}
pencil $\Fo_t=\ker(\omega_t)$  & Riccati $\Ho$ & Monodromy    \\
\cline{1-3}
$dx + ty^{\nu}dy$ & $\frac{dz}{z} + \nu\frac{dy}{y}$ & $z\mapsto e^{2\pi i\nu}z$    \\
\cline{1-3}
$dx + t(\frac{dy}{y^{n+1}}+\frac{dy}{y})$  & 
$\frac{dz}{z} - \frac{n+1+y^n}{y(1+y^n)}dy$ & identity    \\
\cline{1-3}
$dx -y^n\ln(y)dy+ ty^ndy$ & $dz-\left(n\frac{dy}{y}\right)z-y^{n-1}dy$  &
$z\mapsto z+1$\\
\cline{1-3}
$\frac{dy}{y^{n+1}}+(c-x)\frac{dy}{y}-\ln(y)dx+ tdx$  & 
$dz-\left( (n+1)\frac{dy}{y}+\frac{d(xy^n)-cny^{n-1}dy}{1+(c-x)y^n} \right)z-\frac{y^ndy}{1+(c-x)y^n}$ &
$z\mapsto z+1$\\
\cline{1-3}
\end{tabular}
\]
\end{cor}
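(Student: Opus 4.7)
The plan is to combine the two theorems with the standard dictionary between pencils and Riccati equations explained in Section \ref{ss:RiccatiVeronese}. Theorems \ref{T:affinestructure} and \ref{T:affinelogarithmic} exhaust, up to coordinate change at a generic point of the polar divisor, the four possible pencils $\{\omega_t\}_t$ that define a logarithmic torsion-free affine structure — two coming from semi-simple monodromy and two from parabolic monodromy. So the only remaining task is: for each pencil, write down the induced Riccati equation on $\Proj(T_S)$ and read off its projective monodromy around $D$.

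First, I would recall the conversion recipe. Writing $\omega_t=\omega_0-t\omega_\infty$ with $\omega_0=f_0 dx+g_0 dy$ and $\omega_\infty=f_\infty dx+g_\infty dy$, and using the contact relation $dy=zdx$, the first integral of the associated Riccati foliation is
\[
F(x,y,z)=\frac{f_0+zg_0}{f_\infty+zg_\infty}
\]
as in (\ref{eq:FirstIntegRiccati}), so that the Riccati $1$-form is obtained from $dF=0$ by the formulas (\ref{eq:FirstIntegToRicc}). Plugging in each of the four pencils gives, after clearing the common factor, the four equations displayed in the second column of the table.

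Second, for each case I would verify the monodromy column by integrating the Riccati equation explicitly around a small loop encircling $\{y=0\}$. For case (1), $dz/z+\nu dy/y=0$ integrates to $z\cdot y^\nu=\mathrm{const}$, whose analytic continuation along $y\mapsto e^{2\pi i}y$ sends $z\mapsto e^{2\pi i\nu}z$. For case (2), the equation has a rational first integral $zy(1+y^n)^{-(n+1)/n}\cdot(\ldots)$ which, after inspection, is single-valued, so the monodromy is trivial (this matches the fact that the pencil itself is meromorphic, hence has trivial linear monodromy). For cases (3) and (4), after the meromorphic gauge change $z\mapsto z+(\text{appropriate primitive})$ the equation becomes of the form $d\tilde z=(\text{closed logarithmic term})$, whose integration introduces a $\log y$ ambiguity, yielding monodromy $z\mapsto z+1$ after normalizing the residue.

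The easy but slightly tedious part is the algebra of converting each pencil into the Riccati form with the common denominators handled cleanly; I would organize the computation as a single table so the four cases look structurally parallel. The only subtle point is the trivial monodromy claim in case (2): it must be derived either by exhibiting the rational first integral explicitly, or by invoking Proposition on pencils with trivial monodromy (since the pencil here is globally meromorphic, not multivalued, on $(\C^2,0)$), so the projectivization of the monodromy representation of the flat pencil is trivial. Once these observations are in place, the statement is an immediate transcription of the theorems already proved.
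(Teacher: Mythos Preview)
Your proposal is correct and matches the paper's own treatment: the corollary is stated there as an immediate consequence of Theorems \ref{T:affinestructure} and \ref{T:affinelogarithmic}, with the Riccati equations for each pencil already appearing (at least implicitly) inside the proofs of Lemmas \ref{L:Pencil1}, \ref{L:Pencilinvariant}, and \ref{L:Pencilnotinvariant}. Your plan to run the conversion formulas (\ref{eq:FirstIntegRiccati})--(\ref{eq:FirstIntegToRicc}) on each of the four normal pencils and then read off the monodromy is exactly the intended computation, and your remark that case (2) has trivial monodromy because the pencil is honestly meromorphic is the cleanest justification.
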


\vskip1cm


\section{Logarithmic parallelizable webs}

A regular $d$-web on $(\C^2, 0)$ is the superposition  $\W= \Fo_1\boxtimes  \Fo_2 \boxtimes \cdots \boxtimes \Fo_d$ 
of $d$ regular foliations $\Fo_i$ that are moreover pairwise transversal:
\[
\Fo_i = \ker(\omega_i),\ \ \ \omega_i = a_i(x,y)dx+b_i(x,y)dy
\]
for $i=1, \cdots, d$, with 
$$\omega_i\wedge\omega_j=\det\begin{pmatrix} a_i&b_i\\ a_j&b_j\end{pmatrix}dx\wedge dy\not=0.$$
The $d$-web $\W$ is said {\bf parallelizable} if, after a convenient change of coordinates, 
it is defined by linear foliations of parallel lines, i.e. with $a_i,b_i$ constant functions.

It is well-known that regular $1$-webs (foliations), and $2$-webs are parallelizable, but 
a general $d$-web is not for $d\ge3$. The condition for a $3$-web to be parallelizable
is that it is {\bf hexagonal} (see \cite{Robert}). We will provide an explicit criterium in term of torsion later;
for what follows, it is enough to define hexagonal=parallelizable for a $3$-web.
A necessary condition for a $d$-web $\W$ to be linearizable is to require that all extracted $3$-webs 
are hexagonal; in that case, we will say that the $d$-web $\W$ is {\bf hexagonal}.
However, this is not enough, as can be shown by considering $d$ pencils of lines through 
a general collection of $d$ points.
A new constraint arising from $d\ge4$ is as follows.

Assume by a change of local coordinates that $b_i\not=0$ and denote the slope $e_i(x,y):=-\frac{a_i(x,y)}{b_i(x,y)}$.
Then the {\bf cross-ratio}:
\[
 (\Fo_1, \Fo_2; \Fo_3, \Fo_4) := \frac{(e_1-e_3)(e_2-e_4)}{(e_2-e_3)(e_1-e_4)}
\]
is a holomorphic function on $(\C^2, 0)$ intrinsically defined by $\W$. If a $4$-web is parallelizable,
then its cross-ratio must be constant. When we turn to a $d$-web $\W$, $d\ge4$, a necessary condition 
to be parallelizable is that any extracted $4$-web has constant cross-ratio; in that case, we will say that
the $d$-web $\W$ {\bf has constant cross-ratio}. By convention, a $3$-web has constant cross-ratio.
There is a natural link with the notion of pencil of foliations.

\begin{prop}\label{Prop:RegWebCriterium} Let $\W$ be a regular $d$-web, $d >3$. 
Then $\W$ is contained in a pencil of foliations $\{\Fo_t\}_{t\in \Proj^1}$   
if, and only if, $\W$ has constant cross-ratio. In that case,
are equivalent:
\begin{enumerate}
\item $\W$ is parallelizable,
\item $\W$ is hexagonal,
\item the pencil $\{\Fo_t\}_{t\in \Proj^1}$ is torsion free, i.e. $d\kappa=0$.
\end{enumerate}
\end{prop}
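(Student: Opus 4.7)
The plan is to treat the two assertions of the proposition separately. For the first equivalence, I will exploit the fact that the M\"obius group acts on $\Proj^1$ preserving the cross-ratio. If $\W \subset \{\Fo_t\}$ with $\omega_t = \omega_0 - t\omega_\infty = (a_0 - ta_\infty)dx + (b_0 - tb_\infty)dy$, then at every point $(x,y)$ the slope $e_t(x,y) = -(a_0 - ta_\infty)/(b_0 - tb_\infty)$ depends on $t$ via a M\"obius transformation. Hence for any four foliations $\Fo_{t_i}$ of $\W$, the cross-ratio $(\Fo_{t_1},\Fo_{t_2};\Fo_{t_3},\Fo_{t_4})$ coincides with the cross-ratio $(t_1,t_2;t_3,t_4)$ of the constants, and is itself constant.

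Conversely, assuming constant cross-ratio, I pick three foliations $\Fo_1,\Fo_2,\Fo_3$ of $\W$, defined by $\omega_1,\omega_2,\omega_3$. Since $\omega_1,\omega_2$ are pointwise linearly independent, we can write $\omega_3 = \alpha\omega_1 + \beta\omega_2$ with $\alpha,\beta$ non-vanishing functions, and after replacing $\omega_1$ by $\alpha\omega_1$ and $\omega_2$ by $-\beta\omega_2$ I may assume $\omega_3 = \omega_1 - \omega_2$. Setting $\omega_0 := \omega_1$ and $\omega_\infty := \omega_2$, these three foliations correspond to the values $t = 0,\infty,1$ in the pencil $\{\ker(\omega_0 - t\omega_\infty)\}_t$. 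For any other $\Fo_i$ of $\W$, at each point there is a unique $t_i(x,y) \in \Proj^1$ with $\ker(\omega_0 - t_i\omega_\infty) = \ker(\omega_i)$, and the cross-ratio $(\Fo_1,\Fo_2;\Fo_3,\Fo_i)$ is then a non-constant M\"obius function of $t_i(x,y)$. Constancy forces $t_i$ to be constant, so each $\Fo_i$ lies in the pencil and $\W \subset \{\Fo_t\}$.

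For the three-way equivalence, suppose we are in this situation. The implications (1)$\Rightarrow$(2) and (1)$\Rightarrow$(3) are immediate: a parallelization produces coordinates $(x,y)$ in which $\omega_t = dx + t\,dy$, so $\omega_0,\omega_\infty$ are closed (hence the pencil is torsion-free) and every three-subweb straightens into three pencils of parallel lines (hence is hexagonal). The implication (3)$\Rightarrow$(1) is furnished directly by the dictionary of Section~\ref{ss:RiccatiVeronese}: a torsion-free pencil is locally equivalent to the linear pencil $dy - t\,dx = 0$, and this very equivalence parallelizes the whole of $\W$.

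The subtle step is (2)$\Rightarrow$(1). Choose a three-subweb $\{\Fo_1,\Fo_2,\Fo_3\} \subset \W$. By hexagonality there exist local coordinates $(x,y)$ in which these foliations become $dx = 0$, $dy = 0$ and $dx + dy = 0$. The key observation is that the pencil containing $\W$ is entirely determined by any three of its members: once the parameter values $0,\infty,1$ are fixed, every other $\Fo_t$ is recovered pointwise from the cross-ratio condition on $\Proj(T_pS)$. Consequently, in the coordinates above, the whole pencil must coincide with the linear pencil $dx + t\,dy = 0$, and every $\Fo_i \subset \W$ appears as a foliation of parallel lines, so $\W$ is parallelizable. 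The main obstacle is precisely this rigidity: one must justify that straightening a single hexagonal three-subweb automatically straightens the ambient pencil (and so the whole $d$-web), after which the proposition follows from the framework of Section~\ref{ss:RiccatiVeronese}.
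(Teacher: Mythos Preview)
Your proof is correct and follows essentially the same route as the paper's own argument: for the first equivalence you use that three members determine the pencil via cross-ratio, and for the implication $(2)\Rightarrow(1)$ you straighten one hexagonal $3$-subweb and then observe that the pencil (being determined by any three of its members through the cross-ratio condition) is forced to be the linear pencil $dx+t\,dy$. The paper's proof is terser but rests on exactly the same ideas.
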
 

Remind (Section \ref{ss:RiccatiVeronese}) that a pencil of foliations corresponds 
to a Riccati foliation $\Ho$ on $\Proj(T_S)$ via formula 
(\ref{eq:FirstIntegRiccati}-\ref{eq:FirstIntegToRicc}-\ref{eq:FirstIntegToPencil})
and the torsion $d\kappa$ is defined by (\ref{eq:LiftRicToCon}).
In fact, the torsion $d\kappa$ coincides (up to a non zero constant) 
to the Blaschke curvature of any extracted $3$-web of the pencil.

\begin{proof}
The first part is a consequence of a well-known property for Riccati foliations.
In fact, given a regular $3$-web $\W = \Fo_0\boxtimes \Fo_1\boxtimes \Fo_{\infty}$ on $(\C^2, 0)$, 
then there is
a unique pencil $\{\Fo_t\}_{t\in \Proj^1}$ that contains $\Fo_0, \Fo_1$ and $\Fo_{\infty}$ as 
elements. Precisely, $\Fo_t$ is defined as the unique foliation such that
\[
(\Fo_t, \Fo_0; \Fo_1, \Fo_{\infty})=t
\]
and we see that any extracted $d$-web must has constant cross-ratio.

For the second part, if $\Fo_0 \boxtimes \Fo_1 \boxtimes \Fo_{\infty}$ is parallelizable, say,
then in convenient coordinates we can assume that it is $\ker(dx)\boxtimes\ker(dx-dy)\boxtimes\ker(dy)$
and the pencil (defined by constant cross-ratio) is therefore parallel. 
Finally, recall that the pencil is parallelizable if, and only if, the torsion is zero, i.e. $d\kappa=0$.
\end{proof}

Globally, a regular $d$-web which has (locally) constant cross-ratio is better related
to a Veronese web (or a Riccati foliation on $\Proj(T_S)$ with possibly non trivial monodromy).

A {\bf singular $d$-web} $\W$ on a surface $S$ is given by an open covering $\mathcal{U} = \{U_i\}$ of $S$ and
$d$-symmetric 1-forms $\omega_i\in\Sym^d\Omega^1_S(U_i)$ such that 
for each non-empty intersection $U_i\cap U_j$ of elements of $\mathcal{U}$ there exists a non-vanishing
function $g_{ij}\in \Ol_S(U_i\cap U_j)$ such that $\omega_i = g_{ij}\omega_j$. The singular locus $\Delta$
is defined in charts by the discriminent of $\omega_i$ which is assumed to be non identically vanishing:
$\Delta$ is empty, or an hypersurface, and the $d$-web is regular on $S\setminus\Delta$.
We will say that $\W$ is hexagonal, or has constant cross-ratio, or is parallelizable if the property holds on $S\setminus\Delta$.

\begin{thm}\label{Thm:SingWebCriterium}
Let $\W$ be a singular $d$-web on $S$, $d\ge3$, with discriminant $\Delta$ and assume 
$\W$ has constant cross-ratio. Then, the Riccati foliation $\Ho$ defined by $\W\vert_{S\setminus\Delta}$ 
on $\Proj(T_{S\setminus\Delta})$ (see Proposition \ref{Prop:RegWebCriterium}) extends as a singular Riccati foliation on $\Proj(T_S)$ with regular-singularities along $\Delta$;  moreover, it has finite monodromy.

If $\W$ is hexagonal, then $\Ho$ is torsion-free and defines an affine structure on $S$
with regular-singularities along $\Delta$.
\end{thm}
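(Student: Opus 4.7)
The plan is to construct $\Ho$ on $S\setminus\Delta$ from the constant-cross-ratio hypothesis, to use the globally defined multi-section $\Sigma_\W\subset\Proj(T_S)$ to show that the monodromy of $\Ho$ lies in a finite subgroup of $\mathrm{PGL}_2(\C)$, to extend $\Ho$ across a generic smooth point of $\Delta$ via a cyclic cover that trivializes the local monodromy, and finally to deduce torsion-freeness in the hexagonal case by density. For the first two points: on $S\setminus\Delta$ the web is regular with constant cross-ratio, so Proposition~\ref{Prop:RegWebCriterium} produces locally a pencil of foliations containing $\W$; these local pencils patch into a Riccati foliation $\Ho$ on $\Proj(T_{S\setminus\Delta})$, and the restriction of $\Sigma_\W$ to $S\setminus\Delta$ is a union of $d$ leaves of $\Ho$. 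Parallel transport of $\Ho$ along any loop $\gamma\subset S\setminus\Delta$ is a Moebius transformation of the fiber $\pi^{-1}(p_0)\simeq\Proj^1$ preserving the set $\Sigma_\W\cap\pi^{-1}(p_0)$ of $d\ge 3$ distinct points, since $\Sigma_\W$ is a globally defined subvariety of $\Proj(T_S)$. The set-stabilizer in $\mathrm{PGL}_2(\C)$ of $d\ge3$ distinct points is finite (of order dividing $d!$, since any non-trivial element would have to fix at least three points and thus be the identity), so the monodromy representation of $\Ho$ has finite image.

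For the extension, I work at a generic smooth point $p\in\Delta$ in local coordinates $(x,y)$ with $\Delta=\{y=0\}$, and let $k$ be the order of the local monodromy around $\Delta$. The cyclic cover $y=w^k$ trivializes this monodromy, so the pullback of $\Ho$ is a Riccati foliation with trivial monodromy around $\{w=0\}$; it is therefore induced by a pencil $\omega_t=\omega_0-t\omega_\infty$ of closed $1$-forms, defined a priori only off $\{w=0\}$. On the other hand, the pullback of $\Sigma_\W$ is a $d$-sheeted subvariety of the bundle that extends holomorphically across $\{w=0\}$; away from the discrete set of special points of $\Delta$, it consists of $d$ foliations $\Fo_{t_1},\ldots,\Fo_{t_d}$ of the pencil whose defining sections of $\Proj(T_S)$ extend meromorphically across $\{w=0\}$. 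Any three of them, together with the cross-ratio formula, reconstruct the full pencil meromorphically near $\{w=0\}$; hence $\omega_0,\omega_\infty$ extend meromorphically and $\Ho$ extends as a Riccati foliation with (at worst) logarithmic pole on the cover. Descending to $S$ yields a singular Riccati foliation on $\Proj(T_S)$ which is regular-singular along $\Delta$ by Proposition~\ref{prop:RiccLogReg}.

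Finally, if $\W$ is hexagonal then every extracted regular $3$-subweb on $S\setminus\Delta$ is hexagonal, so by Proposition~\ref{Prop:RegWebCriterium} the pencil that contains $\W$ is torsion-free, i.e.\ the $2$-form $K(\Ho)=d\kappa$ vanishes identically on $S\setminus\Delta$. Since $K(\Ho)\in\Gamma(S,\Omega^2_S(2\Delta))$ is meromorphic on $S$, its vanishing on the dense open $S\setminus\Delta$ forces $K(\Ho)\equiv0$; thus $\Ho$ is torsion-free and defines a meromorphic affine structure on $S$ with regular-singular pole along $\Delta$. The main obstacle is the extension step: one must leverage the extendability of the $d$ leaves constituting $\Sigma_\W$ to see that the three (or more) corresponding elements of the pencil extend meromorphically, and then reconstruct the whole meromorphic pencil from these via the cross-ratio condition. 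Once this meromorphic extension of the pencil is in place the regular-singular character is automatic and the torsion statement reduces to density of $S\setminus\Delta$ in $S$.
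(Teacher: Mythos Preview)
Your argument is correct and follows essentially the same route as the paper: finite monodromy via the injection of the monodromy group into the permutation group of the $d\ge3$ fiber points of $\Sigma_\W$, extension and regular-singularity by passing to a ramified cover that kills the local monodromy so that the invariant sections of $\Sigma_\W$ reconstruct a meromorphic pencil (hence a meromorphically trivializable, regular-singular Riccati), and torsion-freeness by density of $S\setminus\Delta$. One minor slip: on the cover the extended Riccati is regular-singular (meromorphically gauge-trivial), not necessarily \emph{logarithmic} in the given trivialization of $\Proj(T_S)$; since your stated conclusion is regular-singular, this does not affect the proof.
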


\begin{proof} By definition of singular $d$-web, the local foliations defined by $\W$ 
at a generic point lift as a global multisection $\Sigma_\W$ of $\Proj(T_S)$, possibly ramifying over $\Delta$.
In local charts, sections of $\Sym^d\Omega^1_S$ take the form 
$$\omega=a_0(x,y)(dx)^d+a_{1}(x,y)(dx)^{d-1}(dy)+\cdots+a_d(x,y)(dy)^d$$
and the multisection $\Sigma_\W$ is defined by 
$$\Sigma_\W=\{a_0(x,y)+a_1(x,y)z+\cdots+a_d(x,y)z^d=0\},\ \ \ z=\frac{dy}{dx}.$$
The Riccati foliation $\Ho$ defined by $\W$ outside of $\Delta$ is determined as follows: 
the multisection $\Sigma_\W$ is a union of leaves for $\Ho$ and, since $d\ge3$, this is enough to define $\Ho$.
All other leaves are algebraic (defined by constant cross-ratio, see proof of Proposition \ref{Prop:RegWebCriterium})
and this implies that $\Ho$ extends as a singular Riccati foliation over $\Delta$.
The monodromy of $\Ho$ induces a permutation of the local branches of $\Sigma_\W$,
and we get a morphism
$$\mathrm{Mon}(\Ho)\to\mathrm{Sym}(d)$$
and it is injective since $d\ge3$; therefore, $\Ho$ has finite monodromy.
We claim that the extension is regular-singular along $\Delta$. Indeed, after ramified covering $\pi:\tilde S\to S$, 
ramifying over $\Delta$, we can assume that the monodromy of $\tilde\Ho=\pi^*\Ho$ is trivial,
and we get a global pencil (leaves define global meromorphic sections). But this implies that we can
trivialize the foliation by a global meromorphic gauge transformation. Therefore $\tilde\Ho$ is regular-singular.
But this implies that $\Ho$ is regular-singular, for instance using characterization in term of polynomial growth
(see \cite{Deligne}).

The last assertion is just a consequence of the definitions (Section \ref{Sec:MeroAffine}).
\end{proof}

Let $\W$ be a singular parallelizable $d$-web on $S$. 
Then $\W$ is said {\bf logarithmic} if the associate affine structure is logarithmic.

\begin{thm} Let $\W$ be a singular parallelizable $d$-web on $S$, $d\geq3$, 
with logarithmic singular points. Then, at a generic point of the discriminant $\Delta$,
$\W$ is contained in one of the following pencils:
\begin{enumerate}
\item $\{(dx)^q+ty^p(dy)^q=0\}_t$, with $(p,q)$ relatively prime positive integers,
\item $\{y^p(dx)^q+t(dy)^q=0\}_t$, with $(p,q)$ relatively prime positive integers,
\item $\{y^{n+1}dx+t(1+y^n)dy=0\}_t$, with $n$ positive integer.
\end{enumerate}
\end{thm}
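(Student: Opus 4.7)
The plan is to reduce the global web statement to the local classification of logarithmic affine structures already obtained. First, by Theorem \ref{Thm:SingWebCriterium}, the parallelizable $d$-web $\W$ determines a torsion-free singular Riccati foliation $\Ho$ on $\Proj(T_S)$ whose associated affine structure is, by hypothesis, logarithmic along $\Delta$. At a generic smooth point of $\Delta$, Corollary \ref{C:calmonlog} normalizes $\Ho$ and its underlying pencil $\{\omega_t\}_t$ to one of four explicit analytic models, together with the projective monodromy of $\Ho$ around $\Delta$. The $d$ foliations constituting $\W$ correspond to $d$ distinct leaves of $\Ho$, equivalently to $d$ distinct parameter values $t_1,\ldots,t_d \in \Proj^1$, which together form the fiber of the multisection $\Sigma_\W \subset \Proj(T_S)$. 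Since $\Sigma_\W$ is globally defined on $S$ (not multivalued), the monodromy of $\Ho$ around $\Delta$ must preserve the set $\{t_1,\ldots,t_d\}$, acting on it by permutation.

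In cases (3) and (4) of Corollary \ref{C:calmonlog} the projective monodromy is the parabolic Moebius transformation $z \mapsto z+1$, whose only finite orbit in $\Proj^1$ is the fixed point $\{\infty\}$. Such an orbit cannot accommodate $d \geq 3$ distinct points, so these two models are incompatible with the hypothesis and can be discarded. The remaining monodromies are either the identity (case (2)) or the rotation $t \mapsto e^{2\pi i \nu} t$ (case (1)); the latter has finite orbits only when $\nu \in \Q^*$.

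In case (2) the monodromy is trivial, so any $d$ members of the pencil form a valid web; clearing denominators in $\omega_t = dx + t\bigl(\frac{dy}{y^{n+1}} + \frac{dy}{y}\bigr)$ rewrites the pencil as $\{y^{n+1} dx + t(1+y^n) dy = 0\}_t$, which is precisely item (3) of the statement. In case (1), write $\nu = \epsilon p/q$ with $\epsilon \in \{\pm 1\}$, $p,q > 0$ and $\gcd(p,q)=1$, and let $\zeta = e^{2\pi i \nu}$, a primitive $q$-th root of unity. The finite orbits of the monodromy on $t$ are the fixed points $\{0\},\{\infty\}$ (giving the foliations $\ker dx, \ker dy$) and the generic size-$q$ orbits $\{t_0 \zeta^k\}_{k=0}^{q-1}$. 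For $\epsilon = +1$, the symmetric $q$-form attached to such an orbit is
\[
\prod_{k=0}^{q-1} \bigl(dx + t_0\zeta^k y^{p/q}\,dy\bigr) \;=\; (dx)^q + (-1)^{q-1} t_0^q\, y^p (dy)^q,
\]
which, as $t_0$ varies, sweeps out exactly the pencil $\{(dx)^q + s\,y^p(dy)^q = 0\}_s$ of item (1), the two fixed-point foliations appearing as $s = 0$ and $s = \infty$. For $\epsilon = -1$, multiplying through by $y^{p/q}$ and running the same computation on $y^{p/q}dx + t\,dy$ produces the pencil $\{y^p(dx)^q + s\,(dy)^q = 0\}_s$ of item (2). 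The web $\W$ is then a finite union of such $q$-webs (together with $\ker dx$ and/or $\ker dy$ when relevant) inside the relevant pencil.

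The main obstacle is the monodromy dichotomy in the second paragraph: one must use in an essential way that $\Sigma_\W$ is a single-valued analytic subset of $\Proj(T_S)$ to conclude that the local monodromy around $\Delta$ induces a finite-order permutation of $d$ points in each fiber, and thereby exclude the parabolic models (3) and (4) of Corollary \ref{C:calmonlog} when $d \geq 3$. Once this exclusion is in place, identifying the two surviving pencils with items (1)--(3) of the statement reduces to a direct computation of symmetric $q$-forms together with careful bookkeeping of the pencil parameters.
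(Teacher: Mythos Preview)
Your proposal is correct and follows essentially the same route as the paper: invoke Theorem~\ref{Thm:SingWebCriterium} to pass to a torsion-free logarithmic Riccati foliation, exclude the parabolic models by finiteness of the monodromy, and then read off the pencils from the semi-simple normal forms with $\nu\in\Q^*$. The paper's own proof is terser---it cites the finite-monodromy clause of Theorem~\ref{Thm:SingWebCriterium} directly rather than re-deriving it from the invariance of $\Sigma_\W$, and it appeals to Theorem~\ref{T:affinestructure} rather than the repackaged Corollary~\ref{C:calmonlog}---but the logic is identical, and your explicit symmetric $q$-form computation is a welcome elaboration of what the paper leaves implicit.
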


\begin{proof}
By Theorem \ref{Thm:SingWebCriterium}, $\W$ defines a logarithmic affine structure on $S$
with poles along the discriminant $\Delta$, with finite monodromy.
Therefore, the monodromy cannot be parabolic, and we apply the local models of 
Theorem \ref{T:affinestructure}, with $\nu\in\mathbb Q^*$.
Then the normal form $dx+ty^\nu dy$ splits into $\nu=\frac{p}{q}$ or $-\frac{p}{q}$,
where $(p,q)$ relatively prime positive integers.
\end{proof}

We did not succeed to find a regular-singular version of Lemma \ref{L:Pencil4}
and this is what is missing to provide normal forms for general torsion-free $d$-webs.


\end{document}